\newcommand{\HH}{\mathbb{H}}
\newcommand{\NN}{\mathbb{N}}
\newcommand{\RR}{\mathbb{R}}
\newtheorem{theorem}{Theorem}[section]
\newtheorem{thm}{Theorem}
\newtheorem{lemma}[theorem]{Lemma}
\newtheorem{corollary}[theorem]{Corollary}
\newtheorem{proposition}[theorem]{Proposition}
\theoremstyle{definition}
\newtheorem{definition}[theorem]{Definition}
\newtheorem{example}[theorem]{Example}
\newenvironment{mythm}[1]
  {\innercustomthm}
  {\endinnercustomthm}
\newcommand{\flip}{\tikz[baseline=-0.76ex]{ \node (A) {$A$}; 
\node[right=1cm of A] (B) {$B$};
\path[->] (A) edge node[ fill=white, anchor=center, pos=0.5] {\footnotesize{$ij$}} (B);
} }
\newcommand{\flipp}{\tikz[baseline=-0.76ex]{ \node (A) {$A$}; 
\node[right=1cm of A] (B) {$B$};
\path[->] (A) edge node[ fill=white, anchor=center, pos=0.5] {\footnotesize{$25$}} (B);
} }
\newcommand{\flippp}{\tikz[baseline=-0.65ex]{ \node (A) {$A$}; 
\node[right=1.5cm of A] (A_1) {$A_1$};
\node[right=1.5cm of A_1] (A_2) {$A_2$};
\node[right=1.5cm of A_2] (D) {$\cdots$};
\node[right=2.25cm of D] (A_k) {$A_k$};
\node[right=1.5cm of A_k] (B) {$B$};
\path[->] (A) edge node[ fill=white, anchor=center, pos=0.5] {\footnotesize{$i_1 j_1$}} (A_1);
\path[->] (A_1) edge node[ fill=white, anchor=center, pos=0.5] {\footnotesize{$i_2 j_2$}} (A_2);
\path[->] (A_2) edge node[ fill=white, anchor=center, pos=0.5] {\footnotesize{$i_3 j_3$}} (D);
\path[->] (D) edge node[ fill=white, anchor=center, pos=0.5] {\footnotesize{$i_{k-1} j_{k-1}$}} (A_k);
\path[->] (A_k) edge node[ fill=white, anchor=center, pos=0.5] {\footnotesize{$i_k j_k$}} (B);
} }
\newcommand{\flipppp}{\tikz[baseline=-0.76ex]{ \node (A) {$A$}; 
\node[right=1cm of A] (A') {$A'$};
\path[->] (A) edge node[ fill=white, anchor=center, pos=0.5] {\footnotesize{$i j$}} (A');
} }
\newcommand{\flipppsp}{\tikz[baseline=-0.76ex]{ \node (A) {$A$}; 
\node[right=1.65cm of A] (B') {$B'$};
\path[->] (A) edge node[ fill=white, anchor=center, pos=0.5] {\footnotesize{$i B(H_3)$}} (B');
} }
\newcommand{\flipss}{\tikz[baseline=-0.76ex]{ \node (A) {$A$}; 
\node[right=1.5cm of A] (A') {$A'$};
\path[->] (A) edge node[ fill=white, anchor=center, pos=0.5] {\footnotesize{$i B(H)$}} (A');
} }
\newcommand{\defn}[1]{\emph{\textcolor{gray}{#1}}}
\pgfplotsset{compat=newest}
\algrenewcommand\algorithmicrequire{\textbf{Input:}}
\algrenewcommand\algorithmicensure{\textbf{Output:}}
\title[Source characterization of the hypergraphic posets]{Source characterization of the hypergraphic posets}
\author{Félix Gélinas}
\address{Department of Mathematics and Statistics, York University, Toronto}
\email{felixgel@yorku.ca}
\urladdr{https://felixgelinas.github.io/}
\begin{document}

\begin{abstract}

    For a hypergraph $\HH$ on $[n]$, the hypergraphic poset $P_\HH$ is the transitive closure of the oriented $1$-skeleton of the hypergraphic polytope $\Delta_\HH$. In a recent paper, N. Bergeron and V. Pilaud provided a characterization of $P_\HH$ based on the sources of acyclic orientations for interval hypergraphs. The goal of this work is to extend this source characterization of $P_\HH$ for arbitrary hypergraphs on $[n]$.
\end{abstract}

\maketitle
\normalem 

\tableofcontents

\section{Introduction}

Fix an integer $n \geq 1$ and let $(e_i)_{i \in [n]}$ be the standard basis of $\mathbb{R}^n$. M. Aguiar and F. Ardila~\cite{aguiar2017hopfmonoidsgeneralizedpermutahedra} introduced the \defn{hypergraphic polytope} $\Delta_\HH$ associated with a hypergraph $\HH$ on the vertex set $[n]$ as the Minkowski sum $\Delta_\HH := \sum_{H \in \HH} \Delta_H$, where $\Delta_H$ is the simplex formed by the convex hull of the points $\{e_h \mid h \in H\}$ in $\mathbb{R}^n$. The singleton hyperedges in $\HH$ do not influence the combinatorial structure of $\Delta_\HH$, as they only contribute translational effects. Hence, we assume that $\{i\} \in \HH$ for every $i \in [n]$.

N. Bergeron and V. Pilaud~\cite{bergeron2024intervalhypergraphiclattices} defined the \defn{hypergraphic poset} $P_\HH$ as the transitive closure of the $1$-skeleton of $\Delta_\HH$ oriented along the direction $\omega := (n-1, n-3, \dots, 3-n, 1-n)$.

A natural question in lattice theory is to characterize the hypergraph $\HH$ for which $P_\HH$ is a lattice. This question has been answered for all hypergraphic posets coming from graphical zonotopes in~\cite{pilaud2024acyclic} and for interval hypergraphic posets~\cite{bergeron2024intervalhypergraphiclattices}. It was also studied in~\cite{barnard2021lattices} for graph associahedra~\cite{carr2006coxeter}.

An \defn{orientation} of a hypergraph $\HH$ is a map $A$ that assigns to each hyperedge $H$ a distinguished vertex $A(H) \in H$, called the \defn{source} of $H$. An orientation is \defn{acyclic} if the directed graph on $[n]$ with edges $(h, A(H))$ for all $h \in H \setminus \{A(H)\}$ and $H \in \HH$ contains no directed cycles.

The set of acyclic orientations of $\HH$ inherits a natural partial order from the combinatorial structure of the hypergraphic polytope $\Delta_\HH$, as described in~\cite[Thm.~2.18]{Benedetti_2019}. Specifically, the vertices of $\Delta_\HH$ correspond to acyclic orientations, and the $1$-skeleton oriented under the direction $\omega$ induces a poset structure on these orientations. Hence, $A \leq B$ if there is a directed path from $A$ to $B$ in the 1-skeleton of $\Delta_\HH$ that is oriented along $\omega$.

To characterize when the interval hypergraphic poset is a lattice,~\cite{bergeron2024intervalhypergraphiclattices} gave a description of $P_\HH$ based on the sources of acyclic orientations for interval hypergraphs.

To extend this tool to all hypergraphic posets, the goal of this work is to generalize the source characterization of $P_\HH$ to arbitrary hypergraphs on $[n]$. Therefore, we prove the following result.

\begin{thm} \label{thm}
    For any acyclic orientations $A$ and $B$ of a hypergraph $\HH$,

    $$A \leq B \Longleftrightarrow A(H)\leq B(H) \textit{ for all } H \in \HH.$$
\end{thm}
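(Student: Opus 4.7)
The plan is to prove the two implications separately; the forward direction reduces quickly to single edges of $P_\HH$, while the backward direction carries the main content of the theorem.

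\textbf{Forward direction.} It suffices to treat a single oriented edge $A \to A'$ of $P_\HH$ and then conclude by transitivity along a directed path. Such an edge arises from a \emph{flip} of the hypergraphic polytope (see \cite[Thm.~2.18]{Benedetti_2019}): there exist $i, j \in [n]$ and a non-empty family $\mathcal{S} \subseteq \HH$ with $A(H) = i$ and $A'(H) = j$ for every $H \in \mathcal{S}$, and $A(H) = A'(H)$ for every $H \notin \mathcal{S}$. The edge vector $v_{A'} - v_A$ is a positive multiple of $e_j - e_i$, and the convention that the $1$-skeleton is oriented along $\omega$ pins down $i < j$. Hence $A(H) \leq A'(H)$ for all $H \in \HH$, and the inequality propagates along the path.

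\textbf{Backward direction.} Assume $A(H) \leq B(H)$ for every $H \in \HH$. I induct on
$$\sigma(A,B) \;:=\; \sum_{H \in \HH} \bigl(B(H) - A(H)\bigr) \;\in\; \ZZ_{\geq 0}.$$
If $\sigma(A,B) = 0$ then $A = B$ and the conclusion is immediate. Assume $\sigma(A,B) > 0$. The strategy is to exhibit an oriented edge $A \to A'$ of $P_\HH$ such that $A'(H) \leq B(H)$ still holds for every $H$; the inductive hypothesis applied to $(A', B)$ then yields $A' \leq B$, whence $A \leq B$. To construct the flip, I let $i$ be the smallest index with $A(H_0) = i < B(H_0)$ for some $H_0 \in \HH$, and select $j$ to be the smallest index $> i$ for which the $(i, j)$-flip based at $A$ is a legal oriented edge of $\Delta_\HH$ and every hyperedge $H$ in the flipped family $\mathcal{S}$ satisfies $B(H) \geq j$. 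The resulting $A'$ then satisfies $A'(H) \leq B(H)$ by design.

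\textbf{Main obstacle.} Proving existence of a suitable $j$ is the crux of the argument. A naive choice may \emph{overshoot}: some $H \in \mathcal{S}$ may have $B(H) < j$. Overcoming this requires combining (i) the combinatorial description of $\mathcal{S}$ as a ``connected component'' in a sub-hypergraph of $\HH$ determined by $A$ and $(i,j)$, (ii) the acyclicity of $B$, which excludes the directed patterns that a genuine overshoot would force, and (iii) the minimality of $i$, which ensures that no hyperedge with $A$-source below $i$ can interfere. Together, these let one either confirm the chosen $j$ or replace it by a strictly smaller valid index $j' > i$; iteration terminates with the required flip. This is where the bulk of the proof lies, and where the generalization from interval hypergraphs \cite{bergeron2024intervalhypergraphiclattices} to arbitrary hypergraphs becomes non-trivial.
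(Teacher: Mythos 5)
Your forward direction and your choice of induction quantity $\sum_{H}(B(H)-A(H))$ match the paper's, and you correctly identify the crux: producing one coherent (i.e.\ cycle-free) increasing flip $A\to A'$ with $S_{A}<S_{A'}\leq S_{B}$. But the specific device you propose for finding this flip has a genuine gap: you fix $i$ to be the \emph{smallest} index with $A(H_0)=i<B(H_0)$ and then search only over $j>i$, iterating downward on $j$ if the first choice fails. This cannot work in general, because for the minimal such $i$ there may be \emph{no} admissible $j$ at all; the correct flip may have to be based at a completely different vertex. Consider $\HH=\{123,34,245,46,67,57\}$ with $A=\mathcal{O}_{1346725}$ and $B=\mathcal{O}_{7523416}$, so $S_A=(1,3,4,4,6,7)$ and $S_B=(2,3,5,4,7,7)$. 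The minimal $i$ with $A(H)<B(H)$ is $i=1$, and the only hyperedge with $A$-source $1$ is $H_1=123$, so a flip at $i=1$ must move $H_1$ to either $j=2$ or $j=3$. Taking $j=2$ is ruled out by legality: after the flip one gets the directed cycle $2\to 3$ (via $123$), $3\to 4$ (via $34$), $4\to 2$ (via $245$). Taking $j=3$ is ruled out by $B$-dominance, since $B(H_1)=2<3$. There is no third option. Yet a valid first flip does exist here — namely at $(i,j)=(6,7)$ via $67$. So ``minimality of $i$'' does not rescue the argument, and a downward search in $j$ for fixed $i$ cannot terminate successfully.

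What the paper does instead is precisely to abandon the fixed base vertex: it runs an algorithm (constructing what it calls a ``source path'' $\kappa$) that, upon encountering an obstructing non-edge path from $i$ to $j$, may \emph{relocate} the base from $i$ to a new vertex $i'$ further along that path (one that is the $A$-source of a hyperedge where $A$ and $B$ still disagree), and separately shrinks $j$ via ``small'' and ``minimized'' hyperedges when no relocation is forced. Termination is then a nontrivial theorem (Proposition~\ref{proofalgo}), proved by showing that an infinite run would assemble a directed cycle in $B$, contradicting acyclicity of $B$. The endpoint of $\kappa$ yields the flip $(i,j)$ that is simultaneously coherent (only edge paths from $i$ to $j$ in $A$, so Lemma~\ref{lemprecoherent} applies) and $B$-dominated (by the small/minimized hyperedge conditions). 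Your ingredients (ii) and (iii) gesture at the right tools, but the plan as stated — holding $i$ fixed at the minimum and iterating on $j$ — misses the essential mechanism of moving the flip's location, which is where the generalization from interval hypergraphs to arbitrary hypergraphs actually lives.
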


\cite[Remark 3.14]{bergeron2024intervalhypergraphiclattices} gives us the forward implication for arbitrary hypergraphs. Hence, this paper will provide a proof of the backward implication of Theorem~\ref{thm}.

Extending this characterization to arbitrary hypergraphs provides a meaningful tool for studying hypergraphic posets and polytopes, as it gives a complete combinatorial interpretation of their geometric and order-theoretic structures. This includes well-known polytopes such as 

\begin{multicols}{2}
\begin{itemize}
    \item the permutahedra,

    \item the graphical zonotopes,

    \item the graph associahedra~\cite{carr2006coxeter},

    \item the nestohedra~\cite{feichtner2005matroid},
    
    \item the associahedra~\cite{shnider1993quantum,loday2004realization},
    
    \item the Pitman-Stanley polytopes~\cite{stanley2002polytope},
    
    \item the freehedra~\cite{saneblidze2009bitwisted},
    
    \item the fertilitopes~\cite{defant2023fertilitopes},
    
    \item the cyclohedra~\cite{Bott2017},

    \item the master polytopes~\cite{agnarsson2013flag},

    \item the hyper-permutahedra~\cite{agnarsson2016special},

    \item the multiplihedra~\cite{forcey2008convex,ardila2013lifted,Chapoton_2025},

    \item the constrainahedra~\cite{Tierney2016,Poliakova2021,bottman2022constrainahedra,Chapoton_2025}.

\end{itemize}
\end{multicols}

Note that hypergraphic polytopes have previously been studied in~\cite{postnikov2008faces,agnarsson2009minkowski,postnikov2009permutohedra,agnarsson2013flag,agnarsson2016special}, and hold particular interest as it forms a subfamily of the generalized permutahedra introduced in~\cite{postnikov2008faces,postnikov2009permutohedra}.

\section{Hypergraphic polytopes, orientations, flips, and hypergraphic posets}

\subsection{Hypergraphic polytope $\Delta_\HH$}

A \defn{hypergraph} $\HH$ on $[n]:= \{1, \dots, n\}$ is a set of subsets of $[n]$. By convention, we always assume that all singletons $\{i\}$ for $i \in [n]$ are included in $\HH$. Let $e_i \in \mathbb{R}^n$ denote the $i^{th}$ standard basis vector in $\mathbb{R}^n$. The \defn{hypergraphic polytope} $\Delta_\HH$ is the Minkowski sum

$$\Delta_\HH:=\sum_{H\in \HH} \Delta_H,$$

\noindent where $\Delta_H$ is the simplex given by the convex hull of the set of points $\{e_h | h\in H\}$ in $\RR^n$. 

\begin{example}

    Figure \ref{figminkow} illustrates the operation to construct the hypergraphic polytope $\Delta_\HH$ associated to the hypergraph $\HH=\{123,13,124\}$. The polytope $\Delta_\HH$ is the Minkowski sum of the simplices $\Delta_{123}$, $\Delta_{13}$ and $\Delta_{124}$.
    \vspace{-0.25cm}
    \begin{figure}[H]
	\centering
        \begin{tikzpicture}[scale=1.2]

		\node at (-6,0){\begin{tikzpicture}
				\node(1) at (1,0,0,0){\textcolor{ Blue!90}{$\bullet$}};
				\node(2) at (0,1,0,0){\textcolor{ Blue!90}{$\bullet$}};
				\node(3) at (0,0,1,0){\textcolor{ Blue!90}{$\bullet$}};
				
				\draw (1.center) -- (2.center);
				\draw (1.center) -- (3.center);
				\draw (3.center) -- (2.center);
				
				\begin{pgfonlayer}{background}
					\draw[Blue!45,fill=Blue!45,line width=0mm,line cap=round,line join=round] (1.center) -- (2.center) -- (3.center) -- cycle;
				\end{pgfonlayer}
				
				\node at (1,0,0,0){\textcolor{ Blue!90}{$\bullet$}};
				\node at (0,1,0,0){\textcolor{ Blue!90}{$\bullet$}};
				\node at (0,0,1,0){\textcolor{ Blue!90}{$\bullet$}};
				
		\end{tikzpicture}};
		
		\node at (-4.75,0){\Huge $+$};
		
		\node at (-3.5,0){\begin{tikzpicture}
				\node(1) at (1,0,0,0){\textcolor{ Yellow}{$\bullet$}};
				\node(3) at (0,0,1,0){\textcolor{ Yellow}{$\bullet$}};
				
				\draw (1.center) -- (3.center);
				
				\node at (1,0,0,0){\textcolor{ Red!95}{$\bullet$}};
				\node at (0,0,1,0){\textcolor{ Red!95}{$\bullet$}};
		\end{tikzpicture}};
		
		\node at (-2.25,0){\Huge $+$};
		
		\node at (-1,0){\begin{tikzpicture}
				\node(1) at (1,0,0,0){\textcolor{ Orange}{$\bullet$}};
				\node(2) at (0,1,0,0){\textcolor{ Orange}{$\bullet$}};
				\node(4) at (0,0,0,1){\textcolor{ Orange}{$\bullet$}};
				
				\draw (1.center) -- (2.center);
				\draw (1.center) -- (4.center);
				\draw (4.center) -- (2.center);
				
				\begin{pgfonlayer}{background}
					\draw[Orange!45,fill=Orange!45,line width=0mm,line cap=round,line join=round] (1.center) -- (2.center) -- (4.center) -- cycle;
				\end{pgfonlayer}
				
				\node at (1,0,0,0){\textcolor{ Orange}{$\bullet$}};
				\node at (0,1,0,0){\textcolor{ Orange}{$\bullet$}};
				\node at (0,0,0,1){\textcolor{ Orange}{$\bullet$}};
				
		\end{tikzpicture}};
		
		\node at (0,0){\Huge $=$};
		
		\node at (2.75,0){\begin{tikzpicture}
				\node(111) at (3,0,0,0){\textcolor{ Blue!90}{$\bullet$}};
				\node(212) at (1,2,0,0){\textcolor{ Blue!90}{$\bullet$}};
				\node(114) at (2,0,0,1){\textcolor{ Blue!90}{$\bullet$}};
				\node(214) at (1,1,0,1){\textcolor{ Blue!90}{$\bullet$}};
				\node(331) at (1,0,2,0){\textcolor{ Blue!90}{$\bullet$}};
				\node(332) at (0,1,2,0){\textcolor{ Blue!90}{$\bullet$}};
				\node(334) at (0,0,2,1){\textcolor{ Blue!90}{$\bullet$}};
				\node(234) at (0,1,1,1){\textcolor{ Blue!90}{$\bullet$}};
				\node(232) at (0,2,1,0){\textcolor{ Blue!90}{$\bullet$}};
				
				\draw (111.center) -- (212.center);
				\draw[dashed] (111.center) -- (114.center);
				\draw (111.center) -- (331.center);
				\draw[dashed] (212.center) -- (214.center);
				\draw (212.center) -- (232.center);
				\draw[dashed] (214.center) -- (114.center);
				\draw[dashed] (214.center) -- (234.center);
				\draw[dashed] (114.center) -- (334.center);
				\draw[dashed] (232.center) -- (234.center);
				\draw (232.center) -- (332.center);
				\draw (332.center) -- (334.center);
				\draw (332.center) -- (331.center);
				\draw (331.center) -- (334.center);
				\draw[dashed] (234.center) -- (334.center);
				
				\begin{pgfonlayer}{background}
					\draw[Orange!45,fill=Orange!45,line width=0mm,line cap=round,line join=round] (331.center) -- (332.center) -- (334.center) -- cycle;
				\end{pgfonlayer}
				
				\node(431) at (1,0,1,1){};
				
				\begin{pgfonlayer}{background}
					\draw[Blue!45,fill=Blue!45,line width=0mm,line cap=round,line join=round] (431.center) -- (114.center) -- (214.center) -- cycle;
				\end{pgfonlayer}
				
				\node at (3,0,0,0){\textcolor{ Black}{$\bullet$}};
				\node at (1,2,0,0){\textcolor{ Red!95}{$\bullet$}};
				\node at (2,0,0,1){\textcolor{ Blue!90}{$\bullet$}};
				\node at (1,1,0,1){\textcolor{ Blue!90}{$\bullet$}};
				\node at (1,0,2,0){\textcolor{ Orange}{$\bullet$}};
				\node at (0,1,2,0){\textcolor{ Orange}{$\bullet$}};
				\node at (0,0,2,1){\textcolor{ Orange}{$\bullet$}};
				\node at (0,1,1,1){\textcolor{ Black}{$\bullet$}};
				\node at (0,2,1,0){\textcolor{ Red!95}{$\bullet$}};
				
		\end{tikzpicture}};
	
		\node at (-6,-1){$\Delta_{123}$};
		\node at (-3.45,-1){$\Delta_{13}$};
		\node at (-1,-1){$\Delta_{124}$};

	\end{tikzpicture}
        \caption{Hypergraphic polytope $\Delta_\HH$ for $\HH=\{123,13,124\}$.}
        \label{figminkow}
\end{figure}
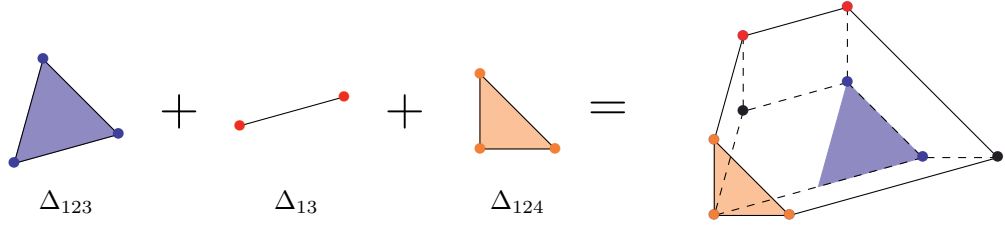
\end{example}

\vspace{-0.25cm}

\subsection{Orientations, increasing flips, and hypergraphic poset $P_\HH$}

In this section, we recall some notions and results given by~\cite{Benedetti_2019} and~\cite{bergeron2024intervalhypergraphiclattices} that are essential to introduce a combinatorial definition of the central object: the hypergraphic poset $P_\HH$. Let us first introduce some key notions.

\begin{definition}\label{orientation}
    An orientation of $\HH$ is a map $A$ from $\HH$ to $[n]$ such that $A(H) \in H$ for all $H \in \HH$. The orientation $A$ is \defn{acyclic} if there is no $H_1, \dots , H_k$ with $k \geq 2$ such that $A(H_{i+1}) \in H_i \setminus \{A(H_i)\}$ for $i \in [k - 1]$ and $A(H_1) \in H_k \setminus \{A(H_k)\}$.
\end{definition}

Let $A$ be an orientation of a hypergraph $\HH$. We refer to the \defn{source} of a hyperedge $H\in \HH$ of an orientation $A$ as the image $A(H)$. Throughout this paper we will denote an orientation $A$ of a hypergraph $\HH$ by the source sequence $(A(H))_{H\in \HH}$.

\begin{definition}\label{flip}
    Two orientations $ A\neq B$ of $\HH$ are related by an \defn{increasing flip} if there exist $1 \leq i < j \leq n$ such that for all $H\in \HH$,

    \begin{itemize}
        \item  if $A(H)\neq B(H)$ then $A(H)=i$ and $B(H)=j$, and
        \item  if $\{i,j\}\subseteq H$, then $A(H)=i \Longleftrightarrow B(H)=j$.
    \end{itemize}

    We denote a flip that relates $A$ and $B$ by \flip \hspace{-0.2cm}.
\end{definition}

See Figure~\ref{fig} for an example of an increasing flip.

Since $\Delta_\HH$ is a generalized permutahedra~\cite{postnikov2008faces,postnikov2009permutohedra}, this leads to a natural surjection from the faces of the permutahedron to the faces of $\Delta_\HH$. We focus on the surjection $\mathcal{O}$ described in~\cite[Lem. $2.9$]{Benedetti_2019} from the set of permutations on $[n]$ to acyclic orientations of $\HH$.

\begin{definition} \label{surj}
    For a permutation $\pi$ of $[n]$, the \defn{orientation $\mathcal{O}_\pi$} of $\HH$ is defined for all $H \in \HH$ by
    \vspace{-0.05cm}
    $$\mathcal{O}_\pi(H):=\pi(\min\{j|\pi(j)\in H\}).$$
\end{definition}

\begin{proposition}[{\cite[Lem.~2.9]{Benedetti_2019}; \cite[Prop.~2.14]{bergeron2024intervalhypergraphiclattices}}
] \label{adjacent}
    For any hypergraph $\HH$ on $[n]$,
    \begin{itemize}
        \item the map $\mathcal{O}$ is a surjection from the permutations of $[n]$ to the acyclic orientations of $\HH$,
        \item two distinct acyclic orientations $A, B$ of $\HH$ are related by the flip \flip if and only if there are permutations $\pi_A, \pi_B$ of $[n]$ such that $\pi_A=\pi_B(ij)$ and $\mathcal{O}_{\pi_A} = A$ and $\mathcal{O}_{\pi_B} = B$. 
    \end{itemize} 
    In other words, the $1$-skeleton of $\Delta_\HH$ is isomorphic to the graph obtained by contracting the fibers of $\mathcal{O}$ in the graph of the permutahedron. 
\end{proposition}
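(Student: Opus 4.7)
The plan is to prove the backward implication by induction on the quantity $d(A,B) := \sum_{H \in \HH}(B(H) - A(H))$. The base case $d(A,B) = 0$ gives $A = B$ and the statement is trivial. For the inductive step, I assume $A \neq B$ are both acyclic with $A(H) \le B(H)$ for every $H$, and I aim to produce an increasing flip $A \to A'$ to an acyclic orientation $A'$ satisfying $A'(H) \le B(H)$ for every $H$. Since this strictly decreases $d$, the inductive hypothesis gives $A' \le B$ in $P_\HH$, and the edge $A \to A'$ yields $A \le B$ in $P_\HH$.

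To build the flip, I introduce the \emph{precedence poset} $P_A$ of an acyclic orientation $A$: the partial order on $[n]$ generated by the relations $A(H) \prec h$ for each $H \in \HH$ and $h \in H \setminus \{A(H)\}$. Acyclicity of $A$ ensures $P_A$ is indeed a partial order, and by Definition~\ref{surj} the fiber $\mathcal{O}^{-1}(A)$ coincides with its set of linear extensions. Setting $S := \{H \in \HH : A(H) < B(H)\}$ (nonempty since $A \ne B$), I would select $H^* \in S$ so that $j := B(H^*)$ covers $i := A(H^*)$ in the Hasse diagram of $P_A$, i.e., no $k$ satisfies $i \prec k \prec j$ in $P_A$'s transitive closure. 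Given such an $H^*$, there is a linear extension $\pi$ of $P_A$ in which $i$ and $j$ occupy consecutive positions with $i$ immediately before $j$, and Proposition~\ref{adjacent} guarantees that the adjacent transposition of $i$ and $j$ produces a permutation $\pi'$ whose image $A' := \mathcal{O}_{\pi'}$ is acyclic, with $(A, A')$ realizing an increasing flip of type $ij$.

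To verify $A'(H) \le B(H)$: by Definition~\ref{flip} the flip alters precisely the sources of hyperedges $H$ with $\{i,j\} \subseteq H$ and $A(H) = i$, replacing each by $j$, so I must show $B(H) \ge j$ for each such $H$. First, $B(H) \ne i$: otherwise $H$ contributes $i \prec_B j$ (since $j \in H$ and $j \ne i = B(H)$) and $H^*$ contributes $j \prec_B i$ (since $i \in H^*$ and $i \ne j = B(H^*)$), creating a cycle in $P_B$ and contradicting acyclicity of $B$. Hence $H \in S$. Choosing $H^*$ to additionally minimize $B(H^*)$ among $\{H' \in S : A(H') = i,\ j \in H'\}$ then forces $B(H) \ge B(H^*) = j$ for every such $H$.

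The principal obstacle is the existence of $H^*$ satisfying the cover condition: the naive choice $j = \min\{B(H) : H \in S,\ A(H) = \min_{H' \in S} A(H')\}$ can fail, as shown by $\HH = \{\{1,2\},\{2,3\},\{1,3\}\}$ with $A = (1,3,1)$ and $B = (2,3,3)$, where flipping $12$ yields a cycle. My approach is to take $H_0 \in S$ minimizing the length of a shortest cover-chain from $A(H_0)$ to $B(H_0)$ in the Hasse diagram of $P_A$, and to show this length must equal $1$. If it were $\ge 2$, then the first cover $A(H_0) \lessdot y$ in the chain would be realized by some $H' \in \HH$ with $A(H') = A(H_0)$ and $y \in H'$; a case analysis on the value of $B(H')$, combined with acyclicity of $A$, acyclicity of $B$, and the componentwise inequality $A \le B$, should either produce an $H^* \in S$ with a strictly shorter chain (contradicting the choice of $H_0$) or expose a cycle in $P_B$. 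Making this existence argument rigorous is the technical crux of the proof.
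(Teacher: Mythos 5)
Your proposal is a proof of the backward implication of Theorem~\ref{thm}, not of Proposition~\ref{adjacent}; the latter is quoted from~\cite{Benedetti_2019} and~\cite{bergeron2024intervalhypergraphiclattices} and carries no proof in this paper. Reviewing it as an attempt at Theorem~\ref{thm}: the high-level strategy --- induct on $\sum_H (B(H)-A(H))$ and produce, at each step, a single increasing coherent flip \flipppp with $S_A < S_{A'} \le S_B$ --- is the paper's strategy, and your precedence poset $P_A$ is a clean repackaging of the paper's language of paths in an orientation. In particular, ``$j$ covers $i$ in $P_A$'' is exactly pre-coherence of the flip \flipppp (Definition~\ref{preco}, Lemma~\ref{lemprecoherent}): a non-edge path from $i$ to $j$ in $A$ is precisely a $k$ with $i \prec k \prec j$.

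The genuine gap is the step you yourself flag as ``the technical crux'': you never produce the hyperedge $H^*$. You want $j := B(H^*)$ to cover $i := A(H^*)$ in $P_A$, and you separately want $B(H^*)$ to be minimal over $\{H' \in S : A(H') = i,\ j \in H'\}$; but these two demands interact because $j$ itself depends on $H^*$. If, after fixing a cover $j$, the minimizer over $\{H' \in S : A(H')=i,\ j\in H'\}$ has $B$-value strictly below $j$, switching to it changes $j$ and may destroy the cover property --- the selection is not obviously consistent, and your ``shortest cover-chain'' idea is stated as a plan, not carried out. This is exactly the content the paper supplies: the small hyperedge of Definition~\ref{small}, the stabilizing recursion defining a minimized hyperedge in Definition~\ref{mini} (the fixed-point device that reconciles ``minimize $B$ over hyperedges through $j$'' with ``$j$ covers $i$''), Proposition~\ref{mustmove} for advancing along a non-edge path when the tuple is not simple, Algorithm~\ref{alg:kappa6} assembling these into the source path $\kappa$, and Proposition~\ref{proofalgo}, which proves termination by turning non-termination into a directed cycle in $B$. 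Until some version of that machinery --- and, crucially, a termination argument exploiting acyclicity of $B$ and not only of $A$ --- is in place, your proposal remains a correct strategy with a hole at its center.
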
 

\begin{example}
    Figure~\ref{fig121} illustrates Proposition~\ref{adjacent} for the hypergraph $\HH=\{12,34,234\}$.
    
     \input{permu}
\end{example}

\pagebreak

From this perspective,~\cite{Benedetti_2019} and~\cite{bergeron2024intervalhypergraphiclattices} give us a combinatorial correspondence between the graph of $\Delta_\HH$ and the notion of increasing flip on acyclic orientations of $\HH$.

\begin{proposition}[{\cite[Thm.~2.18]{Benedetti_2019}; \cite[Prop.~2.8]{bergeron2024intervalhypergraphiclattices}}]
    The graph of $\Delta_\HH$ oriented in the direction $\omega$ is isomorphic to the increasing flip graph on acyclic orientations of $\HH$.
\end{proposition}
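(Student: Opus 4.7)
The plan is to combine Proposition~\ref{adjacent}, which already identifies the undirected $1$-skeleton of $\Delta_\HH$ with the (undirected) flip graph on acyclic orientations of $\HH$, with a short linear-algebraic check that matches the two edge orientations.

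First I would make the bijection between vertices of $\Delta_\HH$ and acyclic orientations explicit in coordinates. Since $\Delta_\HH$ is the Minkowski sum $\sum_{H\in\HH} \Delta_H$, each vertex is realized as a sum of simplex vertices, one per summand, and this choice is encoded precisely by an acyclic orientation $A$ of $\HH$. The vertex associated to $A$ is therefore
\[
v_A \;=\; \sum_{H\in\HH} e_{A(H)}.
\]

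Next I would compute the edge vector $v_B - v_A$ for a pair $(A,B)$ related by an increasing flip with indices $i<j$ in the sense of Definition~\ref{flip}. By that definition, $A$ and $B$ agree except on a nonempty set of hyperedges, on each of which $A(H)=i$ and $B(H)=j$. Hence $v_B - v_A = k(e_j - e_i)$ for some integer $k\geq 1$. Taking the inner product with $\omega = (n-1,n-3,\dots,1-n)$ gives
\[
\langle \omega,\, v_B - v_A\rangle \;=\; k(\omega_j - \omega_i) \;=\; 2k(i-j) \;<\; 0,
\]
since $\omega$ is strictly decreasing in its index. This shows that the edge joining $v_A$ and $v_B$ in the $1$-skeleton of $\Delta_\HH$, oriented along $\omega$ in the sense used to define $P_\HH$, points in exactly the same direction as the increasing flip from $A$ to $B$, consistently with Figure~\ref{fig121}, where the orientation $A(H)=\min H$ sits at the bottom of $P_\HH$ and simultaneously maximizes $\omega$. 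Combined with Proposition~\ref{adjacent}, this yields the desired isomorphism of directed graphs.

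There is no substantive obstacle: the statement reduces to Proposition~\ref{adjacent} plus the single sign computation above. The only point requiring care is pinning down the convention for \emph{oriented along $\omega$} consistently with the introduction's definition of $P_\HH$, so that the inequality $\langle\omega,\, v_B - v_A\rangle<0$ is unambiguously interpreted as ``the directed edge goes from $v_A$ to $v_B$''; once this convention is fixed, the computation above leaves no ambiguity.
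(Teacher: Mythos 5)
The paper does not provide a proof of this proposition; it is imported verbatim as a citation to \cite[Thm.~2.18]{Benedetti_2019} and \cite[Prop.~2.8]{bergeron2024intervalhypergraphiclattices}, so there is no in-paper argument to compare against. Your sketch does supply the correct content: the vertex correspondence $v_A=\sum_{H\in\HH}e_{A(H)}$, the edge correspondence via Proposition~\ref{adjacent}, and the sign check $\langle\omega,\,v_B-v_A\rangle=2k(i-j)<0$ are all accurate. Two small things are worth pinning down more carefully. First, you assume without comment that $A\mapsto v_A$ is a \emph{bijection} onto the vertex set of $\Delta_\HH$; injectivity is genuinely needed (two distinct acyclic orientations a priori could sum to the same point), and it is again part of what \cite{Benedetti_2019} establishes and what Proposition~\ref{adjacent} implicitly records. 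Second, on the orientation convention: your computation shows that an increasing flip strictly \emph{decreases} $\langle\omega,\cdot\rangle$, so ``oriented along $\omega$'' in this paper must be read as ``directed edges go from the $\omega$-larger to the $\omega$-smaller endpoint.'' This is the right convention here (the minimum $\underline A$ of $P_\HH$, with $\underline A(H)=\min H$, is the $\omega$-maximizer, matching the identity at the bottom of the weak order in Figure~\ref{fig121}), but since you flag it as the delicate point, it is worth stating the resolution explicitly rather than leaving it as ``once this convention is fixed.'' With those two points spelled out, the argument is complete.
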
 

This leads to the following combinatorial definition of the hypergraphic poset $P_\HH$:

\begin{definition}
    The \defn{hypergraphic poset} $P_\HH$ is the transitive closure of the increasing flip graph on acyclic orientations of $\HH$.
\end{definition}

\section{Paths, and Flips} \label{new}

We now turn our attention to the notion of flips between two vertices $i$ and $j$ from an orientation in a hypergraph $\HH$, exploring whether such flips create a cycle or preserve the acyclicity of the hypergraph. To understand this, we characterize these flips in terms of the existing paths between the two vertices $i$ and $j$ in a given orientation of $\HH$. This allows us to study the flip graph on acyclic orientations of a hypergraph by analyzing the structure of paths within a given orientation.

\subsection{Paths in a given orientation of $\HH$}

In the following section, we define the notion of paths in a given orientation $A$ of a hypergraph $\HH$.

\begin{definition}
    Given an orientation $A$ on a hypergraph $\HH$, we define a \defn{path} $\lambda$ of length $m$ in $A$ as a sequence of $m+1$ vertices $[h_0, \dots, h_m]$ such that, for each $0 \leq i < m$, there exists a hyperedge $H \in \HH$ with $A(H) = h_i$ and $h_{i+1} \in H$.
\end{definition}

\begin{example}

    Consider the hypergraph $\HH = \{H_1=1245,H_2=23,H_3=35,H_4=46,H_5=356\}$ with the orientation $A = \mathcal{O}_{124635}$. Using this orientation, we will compute all the paths $\lambda_i$ from vertex $1$ to vertex $5$ in $A$. Since $5 \in H_1$, we observe that $\lambda_1 = [1, 5]$ is a path from the vertex $1$ to the vertex $5$ in $A$. Figure~\ref{fig3} graphically illustrates all the paths in $A$. From this figure, we observe that the other paths from vertex $1$ to vertex $5$ in $A$ are

    $$\begin{matrix}\lambda_2=[1,4,6,5] && \lambda_3=[1,4,6,3,5] && \lambda_4=[1,2,3,5] \end{matrix}.$$

    \begin{figure}[H]
    \centering

\begin{tikzpicture}[
    mid arrow1/.style={postaction={decorate,decoration={markings, mark=at position .6 with {\arrow[rotate=8]{Stealth}}
    }}},mid arrow/.style={postaction={decorate,decoration={markings, mark=at position .6 with {\arrow{Stealth}}
    }}},mid arrow2/.style={postaction={decorate,decoration={markings, mark=at position .6 with {\arrow[rotate=-8]{Stealth}}
    }}},mid arrow3/.style={postaction={decorate,decoration={markings, mark=at position .6 with {\arrow[rotate=7]{Stealth}}
    }}},mid arrow4/.style={postaction={decorate,decoration={markings, mark=at position .55 with {\arrow[rotate=7]{Stealth}}
    }}},scale=0.825]

\draw[ red,fill=red!30,ultra thick] plot [smooth] coordinates {(0,0)(1,0.25)(2,0)(1.75,1)(2,2)(1,1.75)(0,2)(0.25,1)(0,0)};
\draw[ blue,ultra thick] plot [smooth] coordinates {(2,0)(3,-0.25)(4,0)};
\draw[ orange,ultra thick] plot [smooth] coordinates {(0,2)(-0.25,3)(0,4)};
\draw[ violet,fill=violet!30,ultra thick] plot [smooth] coordinates {(0,4)(3.25,3.25)(4,0)(3.25,1.25)(2,2)(1.25,3.25)(0,4)};
\draw[ultra thick] plot [smooth] coordinates {(4,0)(2.5,0.75)(2,2)};

\node[draw=yellow, fill=yellow, ellipse, minimum width=0.5cm, minimum height=0.5cm, inner sep=5pt] at (0,0) {};
\node[draw=yellow, fill=yellow, ellipse, minimum width=0.5cm, minimum height=0.5cm, inner sep=5pt] at (2,0) {};
\node[draw=yellow, fill=yellow, ellipse, minimum width=0.5cm, minimum height=0.5cm, inner sep=5pt] at (4,0) {};
\node[draw=yellow, fill=yellow, ellipse, minimum width=0.5cm, minimum height=0.5cm, inner sep=5pt] at (0,2) {};
\node[draw=yellow, fill=yellow, ellipse, minimum width=0.5cm, minimum height=0.5cm, inner sep=5pt] at (2,2) {};
\node[draw=yellow, fill=yellow, ellipse, minimum width=0.5cm, minimum height=0.5cm, inner sep=5pt] at (0,4) {};

\node at (0,0) {$1$};
\node at (2,0) {$2$};
\node at (4,0) {$3$};
\node at (0,2) {$4$};
\node at (2,2) {$5$};
\node at (0,4) {$6$};

\draw[-stealth, thick] (5,2)--(7,2);
\node at (6,2.25) {$\HH$ oriented by $A$};

\draw[ mid arrow1, red,ultra thick] plot [smooth] coordinates {(8,0)(9,0.25)(10,0)};
\draw[ mid arrow, red,ultra thick] plot [smooth] coordinates {(8,0)(9,1)(10,2)};
\draw[ mid arrow2, red,ultra thick] plot [smooth] coordinates {(8,0)(8.25,1)(8,2)};
\draw[ mid arrow1, orange,ultra thick] plot [smooth] coordinates {(8,2)(7.75,3)(8,4)};
\draw[ mid arrow3, violet,ultra thick] plot [smooth] coordinates {(8,4)(9.25,3.25)(10,2)};
\draw[ mid arrow4, violet,ultra thick] plot [smooth] coordinates {(8,4)(11.25,3.25)(12,0)};
\draw[ mid arrow2, blue,ultra thick] plot [smooth] coordinates {(10,0)(11,-0.25)(12,0)};
\draw[ mid arrow2, black,ultra thick] plot [smooth] coordinates {(12,0)(11.25,1.25)(10,2)};

\node[draw=yellow, fill=yellow, ellipse, minimum width=0.5cm, minimum height=0.5cm, inner sep=5pt] at (8,0) {};
\node[draw=yellow, fill=yellow, ellipse, minimum width=0.5cm, minimum height=0.5cm, inner sep=5pt] at (10,0) {};
\node[draw=yellow, fill=yellow, ellipse, minimum width=0.5cm, minimum height=0.5cm, inner sep=5pt] at (12,0) {};
\node[draw=yellow, fill=yellow, ellipse, minimum width=0.5cm, minimum height=0.5cm, inner sep=5pt] at (8,2) {};
\node[draw=yellow, fill=yellow, ellipse, minimum width=0.5cm, minimum height=0.5cm, inner sep=5pt] at (10,2) {};
\node[draw=yellow, fill=yellow, ellipse, minimum width=0.5cm, minimum height=0.5cm, inner sep=5pt] at (8,4) {};

\node at (8,0) {$1$};
\node at (10,0) {$2$};
\node at (12,0) {$3$};
\node at (8,2) {$4$};
\node at (10,2) {$5$};
\node at (8,4) {$6$};

\end{tikzpicture}

    \caption{Graphical representation of $\HH=\{\textcolor{red}{1245},\textcolor{blue}{23},35,\textcolor{orange}{46},\textcolor{violet}{356}\}$ oriented by $A=\mathcal{O}_{124635}$. }
    \label{fig3}
\end{figure}
\end{example}

\begin{definition}
    Given an orientation $A$ on a hypergraph $\HH$ and a path $\lambda$, we define a \defn{subpath} $\alpha$ of $\lambda$ in $A$ as a sequence of consecutive vertices in $\lambda$. Formally, if $\lambda = [h_0, h_1, \ldots, h_m]$, then a subpath $\alpha$ of $\lambda$ can be expressed as $[h_i, h_{i+1}, \ldots, h_j]$ for some $0 \leq i < j \leq m$. We denote this relationship by $\alpha \subseteq \lambda$.
\end{definition}

\begin{definition}
    Given an orientation $A$ on a hypergraph $\HH$, a path of length $1$ in $A$ will be referred to as an \defn{edge}. We say that a path $\lambda$ in $A$ is a \defn{non-edge path} if the length of $\lambda$ is strictly greater than $1$.
\end{definition}

\subsection{Pre-coherent flips, and coherent flips}

In the following section, we define different notions of flips.

\begin{definition} \label{preco}
    Let $A$ and $B$ be two distinct orientations of $\HH$ related by a flip \flip and where $A$ is acyclic. This flip is a \defn{pre-coherent flip} if the only paths from $i$ to $j$ in $A$ are edges.
\end{definition}

\begin{example}

    Figure~\ref{fig} shows an example of two orientations $A$ and $B$, on the hypergraph $\HH=\{12,235,34,45\}$ related by the flip \flipp and graphically demonstrates that this flip is not pre-coherent. In this example we have that $i=2$ and $j=5$. This flip is not pre-coherent since we have that $[2,3,4,5]$ is a path from vertex $2$ to vertex $5$ that has length 3.
    \begin{figure}[H]
    \centering
   \begin{tikzpicture}[
    mid arrow/.style={postaction={decorate,decoration={markings, mark=at position .6 with {\arrow{Stealth}}
    }}},]


    \draw[mid arrow] ({1.5*cos(72)},{1.5*sin(72)}) -- ({1.5*cos(0)},{1.5*sin(0)});
    \draw[mid arrow] ({1.5*cos(288)},{1.5*sin(288)}) -- ({1.5*cos(216)},{1.5*sin(216)});
    \draw[mid arrow] ({1.5*cos(216)},{1.5*sin(216)}) -- ({1.5*cos(144)},{1.5*sin(144)});
    \draw[mid arrow] ({1.5*cos(144)},{1.5*sin(144)}) -- ({1.5*cos(72)},{1.5*sin(72)});
    \draw[mid arrow] ({1.5*cos(216)},{1.5*sin(216)}) -- ({1.5*cos(0)},{1.5*sin(0)});
    
    \node[draw=yellow, fill=yellow, ellipse, minimum width=0.5cm, minimum height=0.5cm, inner sep=5pt] at ({1.5*cos(0)},{1.5*sin(0)}) {};
    \node[draw=yellow, fill=yellow, ellipse, minimum width=0.5cm, minimum height=0.5cm, inner sep=5pt] at ({1.5*cos(72)},{1.5*sin(72)}) {};
    \node[draw=yellow, fill=yellow, ellipse, minimum width=0.5cm, minimum height=0.5cm, inner sep=5pt] at ({1.5*cos(144)},{1.5*sin(144)}) {};
    \node[draw=yellow, fill=yellow, ellipse, minimum width=0.5cm, minimum height=0.5cm, inner sep=5pt] at ({1.5*cos(216)},{1.5*sin(216)}) {};
    \node[draw=yellow, fill=yellow, ellipse, minimum width=0.5cm, minimum height=0.5cm, inner sep=5pt] at ({1.5*cos(288)},{1.5*sin(288)}) {};

    \node (5) at ({1.5*cos(0)},{1.5*sin(0)}) {$5$};
    \node  (4) at ({1.5*cos(72)},{1.5*sin(72)}) {$4$};
    \node (3) at ({1.5*cos(144)},{1.5*sin(144)}) {$3$};
    \node (2) at ({1.5*cos(216)},{1.5*sin(216)}) {$2$};
    \node (1) at ({1.5*cos(288)},{1.5*sin(288)}) {$1$};

    \draw[-stealth] (2.25,0) -- (3.75,0);


    \draw[mid arrow] ({6+1.5*cos(72)},{1.5*sin(72)}) -- ({6+1.5*cos(0)},{1.5*sin(0)});
    \draw[mid arrow] ({6+1.5*cos(288)},{1.5*sin(288)}) -- ({6+1.5*cos(216)},{1.5*sin(216)});
    \draw[mid arrow,color=blue] ({6+1.5*cos(0)},{1.5*sin(0)}) -- ({6+1.5*cos(144)},{1.5*sin(144)});
    \draw[mid arrow] ({6+1.5*cos(144)},{1.5*sin(144)}) -- ({6+1.5*cos(72)},{1.5*sin(72)});
    \draw[mid arrow,color=blue] ({6+1.5*cos(0)},{1.5*sin(0)}) -- ({6+1.5*cos(216)},{1.5*sin(216)});

    \node[draw=yellow, fill=yellow, ellipse, minimum width=0.5cm, minimum height=0.5cm, inner sep=5pt] at ({6+1.5*cos(0)},{1.5*sin(0)}) {};
    \node[draw=yellow, fill=yellow, ellipse, minimum width=0.5cm, minimum height=0.5cm, inner sep=5pt] at ({6+1.5*cos(72)},{1.5*sin(72)}) {};
    \node[draw=yellow, fill=yellow, ellipse, minimum width=0.5cm, minimum height=0.5cm, inner sep=5pt] at ({6+1.5*cos(144)},{1.5*sin(144)}) {};
    \node[draw=yellow, fill=yellow, ellipse, minimum width=0.5cm, minimum height=0.5cm, inner sep=5pt] at ({6+1.5*cos(216)},{1.5*sin(216)}) {};
    \node[draw=yellow, fill=yellow, ellipse, minimum width=0.5cm, minimum height=0.5cm, inner sep=5pt] at ({6+1.5*cos(288)},{1.5*sin(288)}) {};

    \node (55) at ({6+1.5*cos(0)},{1.5*sin(0)}) {$5$};
    \node (44) at ({6+1.5*cos(72)},{1.5*sin(72)}) {$4$};
    \node (33) at ({6+1.5*cos(144)},{1.5*sin(144)}) {$3$};
    \node (22) at ({6+1.5*cos(216)},{1.5*sin(216)}) {$2$};
    \node (11) at ({6+1.5*cos(288)},{1.5*sin(288)}) {$1$};

    \node at (0,-2.25) {$\HH$ oriented by $A$};
    \node at (6.15,-2.25) {$\HH$ oriented by $B$};
    \node at (3,0.25) {flip of};
    \node at (3,-0.25) { $2$ and $5$};

    \begin{pgfonlayer}{background}
        \draw[red!30,fill=red!30,line width=10mm,line cap=round,line join=round] (2.center) -- (3.center) -- (5.center) -- cycle;
        \draw[opacity=.5,cyan!30,fill=cyan!30,line width=10mm,line cap=round,line join=round] (1.center) -- (2.center) -- cycle;
        \draw[opacity=.5,pink!50,fill=pink!50,line width=10mm,line cap=round,line join=round] (3.center) -- (4.center) -- cycle;
        \draw[opacity=.5,violet!50,fill=violet!50,line width=10mm,line cap=round,line join=round] (4.center) -- (5.center) -- cycle;
    \end{pgfonlayer}

    \begin{pgfonlayer}{background}
        \draw[red!30,fill=red!30,line width=10mm,line cap=round,line join=round] (22.center) -- (33.center) -- (55.center) -- cycle;
        \draw[opacity=.5,cyan!30,fill=cyan!30,line width=10mm,line cap=round,line join=round] (11.center) -- (22.center) -- cycle;
        \draw[opacity=.5,pink!50,fill=pink!50,line width=10mm,line cap=round,line join=round] (33.center) -- (44.center) -- cycle;
        \draw[opacity=.5,violet!50,fill=violet!50,line width=10mm,line cap=round,line join=round] (44.center) -- (55.center) -- cycle;
    \end{pgfonlayer}

    \end{tikzpicture}
    \caption{Graphical representation of a non pre-coherent flip on $\HH=\{12,235,34,45\}$ between the orientation $A=\mathcal{O}_{1234}$ and the orientation $B$ given by the sequence of source $S_B=(1,5,3,4)$. }
    \label{fig}
\end{figure}
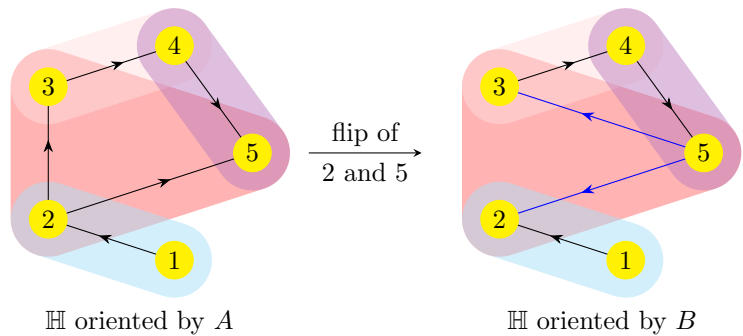
    
\end{example}

\begin{definition}
    Let $A$ and $B$ be two distinct orientations of $\HH$ related by a flip \flip and where $A$ is acyclic. This flip is a \defn{coherent flip} if $B$ is an acyclic orientation.
\end{definition}

That is to say that all cover relations in $P_\HH$ are given by coherent flips. We now have the following result.

\begin{lemma} \label{lemprecoherent}
    Let $A$ and $B$ be two distinct orientations of $\HH$ related by a flip \flip and where $A$ is acyclic. This flip is coherent if and only if it is pre-coherent.
\end{lemma}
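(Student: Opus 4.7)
The plan is to prove each direction by contrapositive and construct an explicit combinatorial witness in each case. I begin by unpacking the structure of the flip: a hyperedge $H \in \HH$ satisfies $A(H) \neq B(H)$ precisely when $\{i,j\} \subseteq H$ and $A(H) = i$, in which case $B(H) = j$; on every other hyperedge $A$ and $B$ agree on the source. I will call such an $H$ \emph{flipped}; since $A \neq B$, at least one flipped hyperedge $H^\ast$ exists, and it contributes an edge $j \to i$ in the $B$-orientation.

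For the direction coherent $\Rightarrow$ pre-coherent, suppose toward contradiction that there is a non-edge path from $i$ to $j$ in $A$, and pick one $\lambda = [i = h_0, h_1, \ldots, h_m = j]$ of minimal length, with no consecutive repeats (any consecutive repeat can be collapsed to shorten $\lambda$). Using acyclicity of $A$ together with minimality, I would first prove that $h_l \notin \{i, j\}$ for each intermediate index $1 \le l \le m - 1$: if $h_l = i$, then either the suffix $[h_l, \ldots, h_m]$ is a strictly shorter non-edge path from $i$ to $j$ (contradicting minimality) or the prefix $[h_0, \ldots, h_l]$ closes into a cycle of length $\ge 2$ in $A$ (contradicting acyclicity), and symmetrically for $h_l = j$. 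Writing $H_l$ for the hyperedge witnessing step $l$ of $\lambda$, the constraint $A(H_l) = h_l \neq i$ for $l \ge 1$ forces each of $H_1, \ldots, H_{m-1}$ to be non-flipped, so the subwalk $h_1 \to h_2 \to \cdots \to h_{m-1} \to j$ survives verbatim in $B$. To close it into a cycle I split on whether $j \in H_0$: if $j \notin H_0$ then $H_0$ is non-flipped, giving $i \to h_1$ in $B$, and $H^\ast$ supplies the closing edge $j \to i$, producing the cycle $i \to h_1 \to \cdots \to h_{m-1} \to j \to i$; if $j \in H_0$ then $H_0$ itself is flipped, giving $j \to h_1$ in $B$ and the shorter cycle $j \to h_1 \to \cdots \to h_{m-1} \to j$. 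In either case $B$ is not acyclic.

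For the reverse direction pre-coherent $\Rightarrow$ coherent, suppose toward contradiction that $B$ has a cycle $H_1, \ldots, H_k$, and let $F \subseteq \{1, \ldots, k\}$ index the flipped hyperedges in that cycle. Then $F \neq \emptyset$, else the cycle lives in $A$ contradicting acyclicity, and $F \neq \{1, \ldots, k\}$, else every $B(H_l) = j$, yet the cycle condition demands $B(H_{l+1}) \in H_l \setminus \{j\}$. Since $|F| < k$, the cyclic gaps between consecutive elements of $F$ sum to $k$ in fewer than $k$ parts, so some gap has size $g \ge 2$. Fix such a gap between flipped indices $l_p$ and $l_{p+1} = l_p + g$: from $A(H_{l_p}) = i$ and the cycle condition $B(H_{l_p + 1}) \in H_{l_p}$, I obtain the first edge $i \to B(H_{l_p + 1})$ in $A$; the intermediate hyperedges $H_{l_p + 1}, \ldots, H_{l_{p+1} - 1}$ are non-flipped, so $A$ and $B$ agree on their sources and the cycle edges $B(H_{l_p + r}) \to B(H_{l_p + r + 1})$ also lie in $A$; the stretch terminates at $B(H_{l_{p+1}}) = j$. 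Concatenating yields a path from $i$ to $j$ in $A$ of length $g \ge 2$, contradicting pre-coherence.

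The main obstacle I anticipate is the bookkeeping in the first direction: choosing the right minimal path and verifying that the constructed cycle in $B$ actually satisfies $B(H_{l+1}) \in H_l \setminus \{B(H_l)\}$ at each step. The intermediate-vertex constraint $h_l \notin \{i, j\}$ together with consecutive-distinctness of $\lambda$ is arranged precisely so that all the required distinctness conditions become immediate, and the split on whether $j \in H_0$ handles the remaining boundary subtlety at the start of $\lambda$.
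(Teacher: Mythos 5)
Your proof is correct, and the forward direction follows essentially the same route as the paper's: a non-edge path $\lambda$ from $i$ to $j$ in $A$ is converted to a cycle in $B$, after a case split on whether the witness $H_0$ for the first step contains $j$. (The paper instead splits on whether some hyperedge with source $i$ contains $j$ together with some vertex of $\lambda$, which amounts to the same dichotomy; your up-front normalization of $\lambda$ — minimal length, no consecutive repeats, interior avoiding $\{i,j\}$ — makes the bookkeeping more explicit than the paper does.) The backward direction is where you genuinely depart from the paper. The paper fixes a single flipped edge $[j,i]$ or $[j,k]$ on the $B$-cycle and then asserts that the remaining arc of the cycle already lies in $A$; as written this needs a further repair step, since that arc might itself traverse other flipped edges out of $j$, and one would have to shorten the cycle so that $j$ is visited only once. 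Your pigeonhole count on the index set $F$ of flipped hyperedges — $F$ nonempty, $F$ proper, hence some cyclic gap has size $g\ge 2$ — locates a maximal stretch of consecutive non-flipped witnesses directly, so the lifted path $\bigl[i, B(H_{l_p+1}),\dots,B(H_{l_{p+1}})=j\bigr]$ lies in $A$ by construction with no patching, and the length bound $g\ge 2$ falls out automatically. This is a tighter and more robust realization of the same underlying idea.
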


\begin{proof}

    Let $A$ and $B$ be two distinct acyclic orientations such that \flip \hspace{-0.2cm}. Since $A\neq B$, there exists a hyperedge $H\in \HH$ such that $\{i,j\}\subseteq \HH$ and where $A(H)=i$. Thus, there is an edge $[i,j]$ in $A$. Suppose that this flip is not pre-coherent. This implies that there is a non-edge path $\lambda$ in $A$ starting from vertex $i$ to the vertex $j$. Two cases are possible. 

    In the first case, suppose there exists a hyperedge $H' \in \HH$ such that $\{i, j, k\} \subseteq H'$ where $k$ is a vertex that is part of the path $\lambda$. Since the subpath $[k,\dots,j]\subset \lambda$ is in $A$ and $i,j\in H'$, there is a path $\gamma=[i,k,\dots,j]$ in $A$. Moreover, since $A$ is acyclic, the entries of $\gamma$ are distinct. Since $\{k,j\}\subset H'$, this implies that after the flip, we have the existence of the edge $[j, k]$ in $B$. The existence of the edge $[a,b]\subset \gamma$ gives us the existence of hyperedge $E\in \HH$ such that $A(E)=a\neq i$ in $B$. This gives us the existence of the edge $[a,b]$ in $B$ and thus, the subpath $\alpha=[k,\dots,j]$ of $\gamma$ in $A$ remains in $B$. Therefore, we obtain a cycle from $j$ to $j$ in $B$ by concatenating the edge $[j, k]$ with the path $\alpha$. (See Figure~\ref{fig} for an example of the first case).

    In the second case, there is no hyperedge $H' \in \HH$ with $A(H')=i$ such that $\{i, j, k\} \subseteq H'$ where $k$ is a vertex that is part of the path $\lambda$. This implies that after the flip, the path $\lambda$ stay unchanged and thus remains in $B$. Moreover, since $[i,j]$ is an edge in $A$, there exists a hyperedge $H'\in \HH$ such that $i,j\in H'$. Hence, this gives the existence of the edge $[j,i]$ is in $B$. Therefore, we obtain a cycle from $i$ to $i$ in $B$ by concatenating the path $\lambda$ with the edge $[j,i]$. This proves the forward direction of the lemma. 

    Suppose that \flip is pre-coherent but not coherent, this implies that there exists a cycle in $B$. Since $A$ is acyclic and that $A$ and $B$ are related by the flip \flip \hspace{-0.2cm}, then either the edge $[j,i]$ is part of a cycle from vertex $i$ to $i$ in $B$ or there exists an edge $[j, k]$, where $k$ is in a hyperedge $H' \in \HH$ such that $\{i, j, k\} \subseteq H'$, that is part of a cycle from $j$ back to $j$ in $B$. In the first scenario, since the edge $[j,i]$ is in $B$, by definition of a flip, the edge $[i,j]$ can not be in $B$. This implies that there is a path that is not an edge from vertex $i$ to vertex $j$ in $B$. Therefore there is a non-edge path from vertex $i$ to vertex $j$ in $A$. In the second scenario, there exists a hyperedge $H'\in \HH$ such that $\{i,j,k\}\subseteq H'$. Thus, this gives us the existence of the edge $[i,k]$ in $A$ and the existence of the path $\alpha=[k,\dots,j]$ (subpath of the cycle in $B$) in $A$. Therefore, by concatenating the edge $[i,k]$ with the path $\alpha$ we obtain a non-edge path from vertex $i$ to vertex $j$ in $A$. Hence, in both cases, we reach a contradiction, showing that this flip cannot be a pre-coherent flip, thereby establishing the acyclicity of $B$. This proves the reverse implication of the lemma and concludes the proof.
\end{proof}

 \vspace{-0.4cm}

\section{A source characterization of $P_\HH$}

\subsection{Source sequences, small and minimized hyperedges}

Given two distinct acyclic orientations $A$ and $B$ of a hypergraph $\HH$, the sufficient condition for the backward direction of Theorem~\ref{thm} is given by:

$$ A(H)\leq B(H) \textit{ for all } H \in \HH.$$

To simplify this condition, we introduce the concept of source sequences and the notion of comparability of these sequences.

\begin{definition}
    Let $A$ be an orientation on $\HH$. Let $S_A=\left(A(H)\right)_{H\in \HH}$ be defined as the \defn{source sequence} of the orientation $A$.
\end{definition}

\pagebreak

\begin{definition}
    Let $A$ and $B$ be two distinct orientations on $\HH$. Two source sequences are said to be \defn{comparable} if $A(H)\leq B(H)$ for all $H\in \HH$. This is denoted as $S_A\leq S_B$
\end{definition}

Let us now introduce two related notions of hyperedges in a given hypergraph $\HH$. Those notions will be crucial to prove that after a given flip, two source sequences will remain comparable.

\begin{definition} \label{small}
    Let $A$ and $B$ be distinct acyclic orientations of $\HH$ such that $S_A < S_B$. Let $S_{i,h}$ be the set of all hyperedges $H\in \HH$ satisfying the following conditions:
    
    \begin{itemize}
        \item [(1)] $A(H)=i$,
        \item [(2)] $A(H)< B(H)$, and
        \item [(3)] $h\in H$.
    \end{itemize} 
    
    When $S_{i,h}\neq \emptyset$, we say that a hyperedge $\mathbf{H}_{i,h}\in S_{i,h}$ is a \defn{small hyperedge} of $S_{i,h}$ if it satisfies the following condition:

    $$B(\mathbf{H}_{i,h})=\min\left\{B(H) : H \in S_{i,h}\right\}.$$

    When $h$ is omitted as a subscript in the notation, we consider the small hyperedge $\mathbf{H}_i$ of $S_i$ where the last condition $(3)$ of the hyperedges in $S_i$ to contain the vertex $h$ is removed.
\end{definition}

\begin{definition}\label{mini}
     Let $A$ and $B$ be distinct acyclic orientations of the hypergraph $\HH$ such that $S_A < S_B$. For given vertices $i$ and $h$ in $[n]$ such that $S_{i,h}\neq \emptyset$, we define the hyperedges $\mathbf{H}^{(k)}_{i,h}$ by the inductive procedure:

     \begin{align*}
         &\mathbf{H}^{(1)}_{i,h} = \mathbf{H}_{i,h_0}, \text{ where } h_0 = h, \\
         &\mathbf{H}^{(2)}_{i,h} = \mathbf{H}_{i,h_1}, \text{ where } h_1 = B(\mathbf{H}_{i,h_0}), \\
         &\text{and for }k \geq 3,   \\
         &\mathbf{H}^{(k)}_{i,h} = \mathbf{H}_{i,h_{k-1}},  \text{ where } h_{k-1} = B(\mathbf{H}_{i,h_{k-2}}).
     \end{align*} 
     
     Where $\mathbf{H}_{i,h_j}$ are small hyperedges of $S_{i,h_j}$. Note that $h_0\geq h_i \geq \dots$. Hence, this sequence of inequalities stabilizes. Consequently, there exists an integer $m$ such that $B(\mathbf{H}^{(m)}_{i,h}) = B(\mathbf{H}^{(m+s)}_{i,h})$ for any $s\in \NN$, where $\mathbf{H}^{(m)}_{i,h}$ and $\mathbf{H}^{(m+s)}_{i,h}$ are respectively small hyperedge of the sets $S_{i,{h_{m-1}}}$ and $S_{i,{h_{m+s-1}}}$. We define the integer $M$ as follows:

     $$M=\min\{m\in \NN | B(\mathbf{H}^{(m)}_{i,h}) = B(\mathbf{H}^{(m+s)}_{i,h}) \textit{ for all } s\in \NN \}.$$

    We say that the hyperedge $\mathbf{H}^{(M)}_{i,h}$ is a \defn{minimized hyperedge} associated to the tuple $(i,h)$. Note that a minimized hyperedge may not be unique.

    \end{definition}

\pagebreak

\begin{example}

    Figure \ref{fig7} illustrates a minimized hyperedge $H^{(M)}_{1,7}$ on the hypergraph $\HH = \{H_1 =157, H_2 =147, H_3 =17, H_4 = 1236, H_5 = 134\}$ oriented by $A = \mathcal{O}_{1234567}$ according to the orientation $B = \mathcal{O}_{5234176}$. Their source sequences are respectively $S_A=(1,1,1,1,1)$ and $S_B=(5,4,7,2,3)$. According to the inductive procedure given in Definition~\ref{mini}, we can compute the following: 

    \begin{align*}
     &\mathbf{H}^{(1)}_{1,7} = \mathbf{H}_{1,7} = H_2 \quad \text{where } h_0 = 7,\\
     &\mathbf{H}^{(2)}_{1,7} = \mathbf{H}_{1,4}=H_5 \quad \text{where } h_1 = B(\mathbf{H}_{1,7})=4,\\
     &\mathbf{H}^{(3)}_{1,7} = \mathbf{H}_{1,3}=H_4 \quad \text{where } h_2 = B(\mathbf{H}_{1,4})=3,\\
     &\mathbf{H}^{(4)}_{1,7} = \mathbf{H}_{1,2}=H_4 \quad \text{where } h_3 = B(\mathbf{H}_{1,3})=2.\\
     \end{align*}

    We have that $B(\mathbf{H}^{(3)}_{1,7})=B(\mathbf{H}^{(4)}_{1,7})$. Hence, the following:
    
    $$3=\min\{m\in \NN | B(\mathbf{H}^{(m)}_{1,h}) = B(\mathbf{H}^{(m+s)}_{1,h}) \textit{ for all } s\in \NN \}.$$
    
    Thus, a possible minimized hyperedge $\mathbf{H}^{(3)}_{1,7}$ is the hyperedge $H_4$.
    
    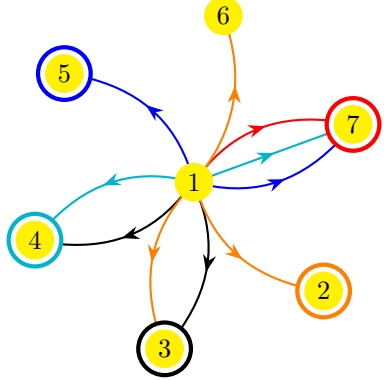
\begin{figure}[H]
    \centering
   \begin{tikzpicture}[
    mid arrow/.style={postaction={decorate,decoration={markings, mark=at position .6 with {\arrow{Stealth}}
    }}},]

    \draw[color=blue,mid arrow,thick] (0,0) to[bend right] ({1.95*cos(15)},{1.95*sin(15)});
    \draw[color=red,mid arrow,thick] (0,0) to[bend left] ({1.95*cos(25)},{1.95*sin(25)});
    \draw[color=Turquoise,mid arrow,thick] (0,0) to ({1.9*cos(20)},{1.9*sin(20)});

    \draw[color=orange,mid arrow,thick] (0,0) to[bend right] ({2.25*cos(80)},{2.25*sin(80)});

    \draw[color=blue,mid arrow,thick] (0,0) to[bend right] ({1.95*cos(135)},{1.95*sin(135)});

    \draw[color=Turquoise,mid arrow,thick] (0,0) to[bend right] ({1.95*cos(195)},{1.95*sin(195)});
    \draw[color=black,mid arrow,thick] (-0.02,0) to[bend left] ({1.95*cos(205)},{1.95*sin(205)});

    \draw[color=black,mid arrow,thick] (-0.02,0) to[bend left] ({1.95*cos(265)},{1.95*sin(265)});
    \draw[color=orange,mid arrow,thick] (0.02,0) to[bend right] ({1.95*cos(255)},{1.95*sin(255)});

    \draw[color=orange,mid arrow,thick] (0.01,0) to[bend right] ({1.95*cos(315)},{1.95*sin(315)});


    \node[draw=yellow, fill=yellow, ellipse, minimum width=0.5cm, minimum height=0.5cm, inner sep=5pt] at (0,0) {};
    
    \node[draw=yellow, fill=yellow, ellipse, minimum width=0.5cm, minimum height=0.5cm, inner sep=5pt] at ({2.25*cos(20)},{2.25*sin(20)}) {};
    \node[draw=yellow, fill=yellow, ellipse, minimum width=0.5cm, minimum height=0.5cm, inner sep=5pt] at ({2.25*cos(80)},{2.25*sin(80)}) {};
    \node[draw=yellow, fill=yellow, ellipse, minimum width=0.5cm, minimum height=0.5cm, inner sep=5pt] at ({2.25*cos(140)},{2.25*sin(140)}) {};
    \node[draw=yellow, fill=yellow, ellipse, minimum width=0.5cm, minimum height=0.5cm, inner sep=5pt] at ({2.25*cos(200)},{2.25*sin(200)}) {};
    \node[draw=yellow, fill=yellow, ellipse, minimum width=0.5cm, minimum height=0.5cm, inner sep=5pt] at ({2.25*cos(260)},{2.25*sin(260)}) {};
    \node[draw=yellow, fill=yellow, ellipse, minimum width=0.5cm, minimum height=0.5cm, inner sep=5pt] at ({2.25*cos(320)},{2.25*sin(320)}) {};

    \node[draw=red, ellipse, minimum width=0.7cm, minimum height=0.7cm, inner sep=5pt, ultra thick] at  ({2.25*cos(20)},{2.25*sin(20)}) {};

    \node[draw=blue, ellipse, minimum width=0.7cm, minimum height=0.7cm, inner sep=5pt, ultra thick] at  ({2.25*cos(140)},{2.25*sin(140)}) {};

    \node[draw=Turquoise, ellipse, minimum width=0.7cm, minimum height=0.7cm, inner sep=5pt, ultra thick] at  ({2.25*cos(200)},{2.25*sin(200)}) {};

    \node[draw=black, ellipse, minimum width=0.7cm, minimum height=0.7cm, inner sep=5pt, ultra thick] at  ({2.25*cos(260)},{2.25*sin(260)}) {};

    \node[draw=orange, ellipse, minimum width=0.7cm, minimum height=0.7cm, inner sep=5pt, ultra thick] at  ({2.25*cos(320)},{2.25*sin(320)}) {};

    \node (7) at (0,0) {$1$};

    \node (6) at ({2.25*cos(20)},{2.25*sin(20)}) {$7$};
    \node (5) at ({2.25*cos(80)},{2.25*sin(80)}) {$6$};
    \node (4) at ({2.25*cos(140)},{2.25*sin(140)}) {$5$};
    \node (3) at ({2.25*cos(200)},{2.25*sin(200)}) {$4$};
    \node (2) at ({2.25*cos(260)},{2.25*sin(260)}) {$3$};
    \node (1) at ({2.25*cos(320)},{2.25*sin(320)}) {$2$};
    
    \end{tikzpicture}
    \caption{The hypergraph $\HH=\{\textcolor{blue}{157},\textcolor{Turquoise}{147},\textcolor{red}{17},\textcolor{orange}{1236},134\}$ oriented by $A=\mathcal{O}_{1234567}$. The colored circle around some vertices represent the sources of each hyperedges when $\HH$ is oriented by $B=\mathcal{O}_{5234176}$.}
    \label{fig7}
\end{figure}
\end{example}

\subsection{Non-edge paths}

Let $A$ and $B$ be two distinct acyclic orientations of a hypergraph $\HH$, such that $S_A < S_B$. To prove the backward direction of Theorem~\ref{thm}, we need to show the existence of a sequence of some $k$ increasing flips

\begin{equation*}
    \flippp
\end{equation*} 

such that

$$S_A < S_{A_1} < S_{A_2} < \cdots < S_{A_k} <  S_B.$$

We claim that there exists $i<j$ in $[n]$ such that \flipppp where $S_A<S_{A'}\leq S_B$. Once we show this claim, the main theorem will follow by induction on the distance from $A$ to $B$ defined by $\sum_{H} (B(H)-A(H))$.

\pagebreak

Hence, given two distinct acyclic orientations $A$ and $B$, we consider a vertex $i$ for which there exists a hyperedge $H\in\HH$ such that $A(H)=i\neq B(H)$. This implies $S_i\neq \emptyset$. Thus, we can consider a small hyperedge $\mathbf{H}_i$ of the set $S_i$ of $\HH$. We look at the flip \flipppp where $i=A(\mathbf{H}_i)$ and $j=B(\mathbf{H}_i)$. 

\begin{proposition} \label{showsmall}
Consider the flip \flipppp where $i = A(\mathbf{H}_i)$ and $j = B(\mathbf{H}_i)$ and where $\mathbf{H}_i$ is a small hyperedge of the set $S_i$. Then $S_{A'} \leq S_B$.
\end{proposition}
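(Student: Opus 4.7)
The plan is to check the required inequality $A'(H) \le B(H)$ one hyperedge at a time. For any $H \in \HH$ whose source is preserved by the flip, $A'(H) = A(H) \le B(H)$ is immediate from the standing hypothesis $S_A \le S_B$, so only hyperedges whose source actually changes warrant attention. By Definition~\ref{flip}, such an $H$ must satisfy $A(H) = i$, $A'(H) = j$ and $\{i,j\} \subseteq H$, so the task reduces to proving $j \le B(H)$ for every such $H$.

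I would then split on whether $H$ belongs to the set $S_i$ from Definition~\ref{small}. If $H \in S_i$, the very definition of the small hyperedge $\mathbf{H}_i$ as a minimizer of $B(\cdot)$ over $S_i$ gives $B(H) \ge B(\mathbf{H}_i) = j$ with essentially no work. Otherwise $A(H) = i$ but $A(H) \not< B(H)$, which when combined with $S_A \le S_B$ pins down $B(H) = i$; in this situation the target inequality $j \le B(H) = i$ fails since $i < j$, so the argument has to rule this subcase out entirely.

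Ruling out this remaining subcase is where I expect the real content of the proof to lie, and the natural tool is the acyclicity of $B$. The idea is that $\mathbf{H}_i$ contains $i$ and satisfies $B(\mathbf{H}_i) = j$, while the hypothetical $H$ contains $j$ and satisfies $B(H) = i$. Setting $H_1 := \mathbf{H}_i$ and $H_2 := H$, one then reads off $B(H_2) = i \in H_1 \setminus \{B(H_1)\}$ and $B(H_1) = j \in H_2 \setminus \{B(H_2)\}$, which is precisely a length-$2$ cycle of the kind forbidden by Definition~\ref{orientation}. This contradicts the acyclicity of $B$ and disposes of the bad subcase, and together with the easy subcase it yields $A'(H) \le B(H)$ for every $H \in \HH$, that is, $S_{A'} \le S_B$.
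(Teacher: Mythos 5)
Your proof is correct and follows the same basic strategy as the paper's: suppose some hyperedge $H$ has $A'(H) > B(H)$, deduce $A(H) = i$ and $A'(H) = j$ from the flip definition, and contradict the minimality of $B(\mathbf{H}_i)$ over $S_i$. The useful extra care in your version is the explicit case split on whether $H \in S_i$. The paper's proof asserts ``Since $A(H) \neq B(H)$'' on its way to invoking the minimality of the small hyperedge, but this step is not automatic: membership of $H$ in $S_i$ requires condition~(2) of Definition~\ref{small}, namely $A(H) < B(H)$, and the standing hypothesis $S_A \le S_B$ together with $A(H) = i$ leaves open the possibility $A(H) = B(H) = i$, in which case $H \notin S_i$ and the minimality of $B(\mathbf{H}_i)$ gives nothing. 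You rule this out by exhibiting the two-term cycle in $B$ forbidden by Definition~\ref{orientation}: with $H_1 = \mathbf{H}_i$ and $H_2 = H$ one has $B(H_2) = i \in H_1 \setminus \{j\}$ and $B(H_1) = j \in H_2 \setminus \{i\}$, contradicting the acyclicity of $B$. That is exactly the observation needed to close the small gap in the paper's argument, so your write-up is a more careful version of the same proof.
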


\begin{proof}

Suppose that $S_{A'}$ and $S_B$ are not comparable. This implies that there exists a hyperedge $H\in \HH$ such that $A'(H)>B(H)$. Since $A(H)\neq B(H)$, this implies that $A(H)=i$ and that $A'(H)=B(\mathbf{H}_i)$. Therefore, we have $B(H) < B(\mathbf{H}_i)$ and $i\in H$, which contradicts the Definition~\ref{small} of a small hyperedge $\mathbf{H}_i$ of the set $S_i$.

\end{proof}

Hence, if the flip \flipppp is coherent we get a first increasing flip from $A$ to $B$ in $P_\HH$. In the case where $A'$ is not acyclic, we must establish some intermediate results. Since $A$ is acyclic while $A'$ is not, Lemma~\ref{lemprecoherent} implies that the flip $\flipppp$ is not pre-coherent, leading to the following key result.

\begin{proposition} \label{mustmove}
Let $A$ and $B$ be two distinct acyclic orientations of $\HH$. For a hyperedge $H \in \HH$ with $i=A(H)< j=B(H)$, if the flip $\flipppp$ is not pre-coherent, then for any non-edge path $\lambda = [i,h, \dots, j]$ in $A$, there exists a hyperedge $H' \in \HH$ such that:
\begin{itemize}
    \item [(1)] $A(H') \neq B(H')$, and
    \item [(2)] $[A(H'), h'] \subset \lambda$, where $h' \in H'$.
\end{itemize}
Moreover, in the case where $\{i,j,h\}\subseteq F'$, for some hyperedge $F'\in \HH$ with $A(F')=i$ and $B(F')=j$, we can choose $H'$ such that $A(H')\neq i$.

\end{proposition}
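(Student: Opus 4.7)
The plan is to argue by contradiction and exploit the acyclicity of $B$. Writing $\lambda = [h_0, h_1, \dots, h_m]$ with $h_0 = i$, $h_1 = h$, $h_m = j$, and $m \geq 2$, I would first fix, for each $0 \leq k < m$, a witness hyperedge $E_k \in \HH$ with $A(E_k) = h_k$ and $h_{k+1} \in E_k$ (such hyperedges exist by the definition of a path). If no $H'$ satisfying (1) and (2) existed, then in particular each $E_k$ would satisfy $A(E_k) = B(E_k) = h_k$, making $\lambda$ a path in $B$ as well. Since $A(H) = i$ forces $i \in H$ and $B(H) = j$, the hyperedge $H$ provides an edge $[j, i]$ in $B$. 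I would then argue that the sequence $E_0, E_1, \dots, E_{m-1}, H$ witnesses a cycle in $B$ in the sense of Definition~\ref{orientation}, contradicting the acyclicity of $B$.

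The main obstacle is verifying that this sequence really satisfies the cycle definition, i.e.\ that $B(H_{k+1}) \in H_k \setminus \{B(H_k)\}$ at each step without degeneracies. A preliminary observation is that any path in an acyclic orientation has pairwise distinct vertices: a repetition would isolate a closed subwalk whose consecutive witnesses form a directed cycle under $A$. This gives $h_{k+1} \neq h_k$ throughout $\lambda$, so $B(E_{k+1}) = h_{k+1} \in E_k \setminus \{h_k\}$; the final two conditions $B(H) = j \in E_{m-1} \setminus \{h_{m-1}\}$ and $B(E_0) = i \in H \setminus \{j\}$ then follow from $j \neq h_{m-1}$ (distinct path vertices) and $i < j$ respectively.

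For the moreover assertion, assume $\{i, j, h\} \subseteq F'$ with $A(F') = i$ and $B(F') = j$ and run essentially the same contradiction starting one step later along $\lambda$. Suppose every witness $E_k$ for $k \geq 1$ satisfied $A(E_k) = B(E_k)$. Then $[h_1, h_2, \dots, h_m]$ is a path in $B$, while $F'$ produces an edge $[j, h]$ in $B$ (using $B(F') = j$ and $h \in F'$). The sequence $E_1, \dots, E_{m-1}, F'$ then forms a cycle in $B$, again contradicting acyclicity. Hence some $E_k$ with $k \geq 1$ must disagree under $A$ and $B$, and setting $H' := E_k$ yields $A(H') = h_k \neq i$ as required, with $h' := h_{k+1} \in E_k$ giving the required subpath $[A(H'), h'] \subset \lambda$.
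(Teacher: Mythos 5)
Your proof is correct and takes essentially the same approach as the paper: assume no suitable $H'$ exists, deduce that $\lambda$ (resp.\ $[h,\dots,j]$ for the moreover clause) survives as a path in $B$, and close it up with the reversed edge coming from $H$ (resp.\ $F'$) to produce a cycle in $B$, contradicting acyclicity. You are somewhat more explicit than the paper in unpacking Definition~\ref{orientation} to confirm that the witness hyperedges really do form a cycle and that path vertices in an acyclic orientation are pairwise distinct, but the underlying argument is the same.
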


\begin{proof}
 
    Consider a hyperedge $H\in \HH$ such that $A(H)\neq B(H)$ and denote $A(H)=i$ and $B(H)=j$. Consider the flip \flipppp is not pre-coherent. 
    
    Given $H\in \HH$ and the flip \flipppp, we will prove the existence of a hyperedge $H'\in \HH$ as mentioned in the proposition. Suppose, for the sake of contradiction, that there exists a non-edge path $\lambda = [i, h, \dots, j]$ in $A$ and no hyperedge $H' \in \HH$ satisfying conditions $(1)$ and $(2)$. This assumption implies that every hyperedge $E \in \HH$ containing the vertices $a$ and $b$, where $[a, b] \subset \lambda$, and satisfying $A(E)=a$, must also satisfy $A(E) = B(E)$. If not, there would exist a hyperedge $G \in \HH$ such that $A(G) = a \neq B(G)$ that contains $b$ and such that $[A(G), b] \subset \lambda$, contradicting our hypothesis on $\lambda$.
    
    Now, since $\lambda$ is in $A$, for every edge $[a,b] \subset \lambda$ there is a hyperedge $F\in \HH$ such that $A(F)=a$ and $b\in F$. Therefore, we have that $A(F)=B(F)=a$ and $F$ gives us the existence of $[a,b]$ in $B$. Hence, $\lambda$ is in $B$.
    
    Now, since $A(H)=i\neq B(H)=j$, the edge $[j,i]$ is in $B$. This gives us the existence of a cycle from $j$ to $j$ in $B$ formed by the concatenation of $\lambda$ with the edge $[j,i]$ in $B$. This is a contradiction since $B$ is acyclic. Therefore, a hyperedge $H'$ satisfying $(1)$ and $(2)$ must exist. 
    
    Moreover, suppose that the only hyperedges $G'\in \HH$ satisfying the conditions $(1)$ and $(2)$ are such that $A(G')=i$. That is, every hyperedge $E \in \HH$ containing the vertices $a$ and $b$, where $[a, b] \subset [h,\dots,j]$, and satisfying $A(E)=a$, must also satisfy $A(E) = B(E)$.
    
    Using the same arguments from the second and third paragraph of the proof we have $[h,\dots,j]$ is in $B$. Suppose we have $F'$ with $\{i,j,h\}\subseteq F'$ satisfying $(1)$ and $(2)$ such that $B(F')=j$, then $[j,h]$ exists in $B$. This gives us the existence of a cycle from $j$ to $j$ in $B$. This cycle is formed by concatenating the edge $[j,h]$ with the path $\alpha=[h,\dots,j]$. This is a contradiction since $B$ is acyclic. Therefore, our hypothesis fails and there must be a $H'$ with $A(H')\neq i$ satisfying $(1)$ and $(2)$.

\end{proof}

To clarify the notation in the proof, we introduce the following concept.

\begin{definition}\label{simple}

Let $A$ and $B$ be distinct acyclic orientations of $\HH$. For a hyperedge $H \in \HH$, where $i = A(H) < j = B(H)$, let $\lambda = [i, h, \dots, j]$ be a non-edge path from $i$ to $j$ in $A$. We say that the tuple $(i, j)_\lambda$ is \defn{simple} in $A$ if for all hyperedges $H'\in \HH$ satisfying the conditions $(1)$ and $(2)$ of Proposition~\ref{mustmove}, we have that $A(H')=i$ and $B(H')\neq j$.

\end{definition}

\begin{example}
    Let $\HH$ be the following hypergraph
    
    $$\begin{matrix}
        \HH= & \{H_1=12, H_2=13, H_3=23, H_4=14, H_5=45, \\
        & H_6=25, H_7=16, H_8=67, H_9=78, H_{10}=28\}
    \end{matrix}.$$ 
    
    Let us consider the two distinct acyclic orientations $A=\mathcal{O}_{13465782}$ and $B=\mathcal{O}_{34567821}$. Their source sequences are respectively $S_A=(1,1,3,1,4,5,1,6,7,8)$ and $S_B=(2,3,3,4,4,5,6,6,7,8)$. In $A$, let us consider all the paths from the vertex $1$ to the vertex $2$.
    
    $$\begin{matrix}
        \lambda_1=[1,2] && \lambda_2=[1,3,2] && \lambda_3=[1,4,5,2] && \lambda_4=[1,6,7,8]
    \end{matrix}.$$
    
    Figure \ref{fig8} illustrates tuples $(1,2)_{\lambda_2}, (1,2)_{\lambda_3}, (1,2)_{\lambda_4}$ that are simple in $A$. In this example, for all the hyperedges $H\in \HH$ such that $A(H)\neq B(H)$, we have $A(H) = 1$.
     
     $$\begin{matrix}
        A(H_1)=1\neq B(H_1)=2 && A(H_2)=1\neq B(H_2)=3 \\ A(H_4)=1 \neq B(H_4)=4 && A(H_7)=1 \neq B(H_7)=6
    \end{matrix}.$$

    Moreover, for hyperedges $H_2,H_4,H_7$, we have: 
    
    $$\begin{matrix}
        [A(H_2),B(H_2)]\subset \lambda_2 && [A(H_4), B(H_4)]\subset \lambda_3 && [A(H_7),B(H_7)]\subset \lambda_4
    \end{matrix}.$$
    
    The above gives us that the hyperedges $H_2,H_4,H_7\in \HH$ are satisfying condition $(1)$ and $(2)$ from Proposition~\ref{mustmove}, respectively for the path $\lambda_2,\lambda_3,\lambda_4$. Moreover, since $B(H_2), B(H_4),$ $B(H_7)\neq 2$, we can conclude that the tuple $(1,2)_\lambda$ is simple in $A$ for the path $\lambda=\lambda_2,\lambda_3,\lambda_4$ from vertex $i$ to $j$ in $A$.

    \begin{figure}[H]
    \centering
   \begin{tikzpicture}[scale=1,
    mid arrow/.style={postaction={decorate,decoration={markings, mark=at position .6 with {\arrow{Stealth}}
    }}},]


    \draw[color=red,thick,mid arrow] (0,0) -- (8,0);

    \draw[color=cyan,thick,mid arrow] (0,0) -- (4,0.75);
    \draw[color=pink,thick,mid arrow] (4,0.75) -- (8,0);

    \draw[color=violet,thick,mid arrow] (0,0) -- (2,1.5);
    \draw[color=orange,thick,mid arrow] (2,1.5) -- (6,1.5);
    \draw[color=Turquoise,thick,mid arrow] (6,1.5) -- (8,0);

    \draw[color=green,thick,mid arrow] (0,0) -- (1,2.25);
    \draw[color=black,thick,mid arrow] (1,2.25) -- (4,3);
    \draw[color=purple,thick,mid arrow] (4,3) -- (7,2.25);
    \draw[color=blue,thick,mid arrow] (7,2.25) -- (8,0);

    \node[draw=white,fill=white, ellipse, minimum width=0.5cm, minimum height=0.5cm, inner sep=5pt] at (0,0) {};
    \node[draw=red,ultra thick,fill=white, ellipse, minimum width=0.5cm, minimum height=0.5cm, inner sep=5pt] at (8,0) {};

    \node[draw=cyan,ultra thick,fill=white, ellipse, minimum width=0.5cm, minimum height=0.5cm, inner sep=5pt] at (4,0.75) {};

    \node[draw=violet,ultra thick,fill=white, ellipse, minimum width=0.5cm, minimum height=0.5cm, inner sep=5pt] at (2,1.5) {};
    \node[draw=white,fill=white, ellipse, minimum width=0.5cm, minimum height=0.5cm, inner sep=5pt] at (6,1.5) {};

    \node[draw=green,ultra thick,fill=white, ellipse, minimum width=0.5cm, minimum height=0.5cm, inner sep=5pt] at (1,2.25) {};
    \node[draw=white,fill=white, ellipse, minimum width=0.5cm, minimum height=0.5cm, inner sep=5pt] at (4,3) {};
    \node[draw=white,fill=white, ellipse, minimum width=0.5cm, minimum height=0.5cm, inner sep=5pt] at (7,2.25) {};

    \node (1) at (0,0) {$1$};
    \node (2) at (8,0) {$2$};
    \node (3) at (4,0.75) {$3$};
    \node (4) at (2,1.5) {$4$};
    \node (5) at (6,1.5) {$5$};
    \node (6) at (1,2.25) {$6$};
    \node (7) at (4,3) {$7$};
    \node (8) at (7,2.25) {$8$};

    \end{tikzpicture}
    \caption{The hypergraph $\HH=\{\textcolor{red}{12},\textcolor{Cyan}{13},\textcolor{pink}{23},\textcolor{violet}{14},\textcolor{orange}{45},\textcolor{Turquoise}{25},\textcolor{green}{16},67,\textcolor{purple}{78},\textcolor{blue}{28}\}$ oriented by $A=\mathcal{O}_{13465782}$. The colored circle around some vertices represent the sources of each hyperedges when $\HH$ is oriented by $B=\mathcal{O}_{34567821}$.}
    \label{fig8}
\end{figure}
\end{example}

\pagebreak

\subsection{Source path $\kappa$}

To show the existence of coherent flip \flipppp such that:

\begin{itemize}
    \item[(1.)] $\sum_{H\in \HH} (B(H)-A'(H))< \sum_{H\in \HH} (B(H)-A(H))$, and
    \item[(2.)] $S_A < S_{A'}\leq S_B$,
\end{itemize}

we will construct a useful path in $A$ denoted $\kappa$ and referred as a \defn{source path} of a non-coherent flip \flipppp \hspace{-0.2cm}. In the setup of a source path, we include the notion of paths of length $0$ in A. More specifically, this corresponds to a sequence consisting of a single vertex. Below, we provide an algorithm to construct $\kappa$. 

In this algorithm, we denote $[\hat{\imath}, \dots, j] = [h, \dots, j]$ for a path $[i, h, \dots, j]$.

\begin{algorithm}[H] 
\caption{Construction of the Path $\kappa$}
\label{alg:kappa6}

\textbf{Input:}
\justifying{Two distinct acyclic orientations $A$ and $B$ on $\HH$; A non-coherent flip \hspace*{13.5mm} \flipppp \hspace{-0.2cm}, where $i=A(H)< j=B(H)$ for some $H \in \HH$.}

\textbf{Output:} Constructed path $\kappa$.

    \begin{algorithmic}[1] 
    \State Consider $\mathbf{H}_{i}$, a small hyperedge for the set $S_{i}$;
    \State  Set $j \leftarrow B(\mathbf{H}_{i})$;
    \State  Set $\kappa = [i]$, where $i$ is the initial vertex.
    \While{a non-edge path exists from $i$ to $j$}
        \While{there is a non-edge path $\lambda$ in $A$ from $i$ to $j$ such that $(i,j)_\lambda$ is not simple in $A$}
        \State  \justifying{Locate the first vertex $i'$ in $\lambda$ starting from $i$ such that $i'\neq i$ is the source of a \hspace*{9.5mm} hyperedge $H' \in \HH$ with $A(H') \neq B(H')$ and where $h'\in H'$ for $[i',h']\subset \lambda$.} 
        \State Append $[\hat{\imath},\dots,i']\subset \lambda$ to $\kappa$;
        \State Consider $\mathbf{H}_{i'}$, a small hyperedge of the set $S_{i'}$;
        \State  Set $i \leftarrow i'$ and $j \leftarrow B(\mathbf{H}_{i'})$.
            \EndWhile
            \State Let $\lambda=[i,h,\dots,j]$ be a non edge path in $A$ such that $(i,j)_\lambda$ is simple in $A$;
            \State Consider a minimized hyperedge $\mathbf{H}^{(M)}_{i,h}$;
            \State  Set $j \leftarrow B(\mathbf{H}^{(M)}_{i,h})$.

    \EndWhile

    \State  \textbf{Return} the path $\kappa$.
    \end{algorithmic}
\end{algorithm} 

\begin{example} \label{source path}

    Let $\HH=\{H_1=123,H_2=34,H_3=245,H_4=46,H_5=67,H_6=57\}$ and let us consider the orientations $A=\mathcal{O}_{1346725}$ and $B=\mathcal{O}_{7523416}$. Their source sequences are respectively $S_A=(1,3,4,4,6,7)$ and $S_B=(2,3,5,4,7,7)$.
    
    Let $i=1$ and $j=2$. Since there is a non-edge path $\lambda_1=[1,3,4,2]$ from vertex $i$ to vertex $j$ in $A$, Lemma~\ref{lemprecoherent} tells us that the flip \flipppp is not coherent. We then construct the source path associated to the non-coherent flip \flipppp \hspace{-0.2cm}. Hence, the source path will start with $1$.
    
    Since $H_1$ is the only hyperedge containing the vertex $i=1$, we let $j=B(H_1)=2$ and consider the only non-edge path $\lambda_1$. The first source associated to a vertex $i'$ in $\lambda_1$ such that $A(H)=i'$ and $B(H)\neq i'$ is given by $A(H_3)=i'=4$. Since $2\in H_3$ we append $[3,4]$ to the source path.
    
    Let us start the process again by considering $i=4$. Since $H_4$ is a small hyperedge with source $A(H_4)=4$, we set $j=B(H_4)=6$. Let us consider the only non-edge path $\lambda_2=[4,6,7,5]$ from vertex $i$ to vertex $j$.  The first source associated to a vertex $i'$ in $\lambda_2$ such that $A(H)=i'$ and $B(H)\neq i'$ is given by $A(H_5)=i'=6$. Since $7\in H_5$ we append $[6]$ to the source path.
    
    Let us start the process again by considering $i=6$. Since $H_6$ is a small hyperedge with source $A(H_6)=6$, we set $j=B(H_6)=7$. Since there is no non-edge path from vertex $6$ to vertex $7$ and no other hyperedge containing the vertices $\{6,7\}$, the process stop. 
    
    Therefore, the source path $\kappa$ associated to the non-coherent flip \flipppp is given by $\kappa=[1,3,4,6]$. This source path $\kappa$ related to this example is shown in Figure~\ref{fig2}.
    
    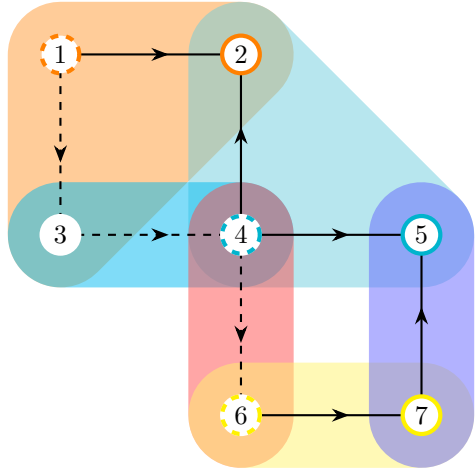
\begin{figure}[H]
    \centering
   \begin{tikzpicture}[scale=1.2,
    mid arrow/.style={postaction={decorate,decoration={markings, mark=at position .6 with {\arrow{Stealth}}
    }}},]


    \draw[dashed,thick,mid arrow] (0,0) -- (2,0);
    \draw[dashed,thick,mid arrow] (0,2) -- (0,0);
    \draw[thick,mid arrow] (0,2) -- (2,2);
    \draw[thick,mid arrow] (2,0) -- (2,2);
    \draw[dashed,thick,mid arrow] (2,0) -- (2,-2);
    \draw[thick,mid arrow] (2,0) -- (4,0);
    \draw[thick,mid arrow] (4,-2) -- (4,0);
    \draw[thick,mid arrow] (2,-2) -- (4,-2);

    \node[ fill=white, ellipse, minimum width=0.5cm, minimum height=0.5cm, inner sep=5pt] at (0,2) {};
    \node[draw=orange,dashed, ultra thick, ellipse, minimum width=0.5cm, minimum height=0.5cm, inner sep=5pt] at (0,2) {};
    \node[draw=orange,ultra thick,fill=white, ellipse, minimum width=0.5cm, minimum height=0.5cm, inner sep=5pt] at (2,2) {};
    \node[draw=white,ultra thick, fill=white, ellipse, minimum width=0.5cm, minimum height=0.5cm, inner sep=5pt] at (0,0) {};
    \node[draw=white,ultra thick, fill=white, ellipse, minimum width=0.5cm, minimum height=0.5cm, inner sep=5pt] at (2,0) {};
    \node[draw=Turquoise,ultra thick,dashed, ellipse, minimum width=0.5cm, minimum height=0.5cm, inner sep=5pt] at (2,0) {};
    \node[draw=white,ultra thick, fill=white, ellipse, minimum width=0.5cm, minimum height=0.5cm, inner sep=5pt] at (2,-2) {};
    \node[draw=yellow,ultra thick, dashed, ellipse, minimum width=0.5cm, minimum height=0.5cm, inner sep=5pt] at (2,-2) {};
    \node[draw=Turquoise,ultra thick, fill=white, ellipse, minimum width=0.5cm, minimum height=0.5cm, inner sep=5pt] at (4,0) {};
    \node[draw=yellow,ultra thick, fill=white, ellipse, minimum width=0.5cm, minimum height=0.5cm, inner sep=5pt] at (4,-2) {};

    \node (1) at (0,2) {$1$};
    \node (2) at (2,2) {$2$};
    \node (3) at (0,0) {$3$};
    \node (4) at (2,0) {$4$};
    \node (6) at (2,-2) {$6$};
    \node (5) at (4,0) {$5$};
    \node (7) at (4,-2) {$7$};

    \begin{pgfonlayer}{background}
        \draw[orange!40,fill=orange!40,line width=14mm,line cap=round,line join=round] (1.center) -- (2.center) -- (3.center) -- cycle;
        \draw[opacity=.5,Turquoise!60,line width=14mm,line cap=round,line join=round] (2.center) -- (4.center) -- (5.center) -- cycle;
        \draw[opacity=.5,cyan!80,fill=cyan!80,line width=14mm,line cap=round,line join=round] (3.center) -- (4.center) -- cycle;
        \draw[opacity=.5,red!70,fill=red!70,line width=14mm,line cap=round,line join=round] (4.center) -- (6.center) -- cycle;
        \draw[opacity=.5,yellow!70,fill=yellow!70,line width=14mm,line cap=round,line join=round] (6.center) -- (7.center) -- cycle;
        \draw[opacity=.5,blue!60,fill=blue!60,line width=14mm,line cap=round,line join=round] (5.center) -- (7.center) -- cycle;
    \end{pgfonlayer}

    \end{tikzpicture}
    \caption{The hypergraph $\HH=\{123,34,245,46,57,67\}$ oriented by $A=\mathcal{O}_{1342567}$. The full circles around some vertices represent the sources of each hyperedges when $\HH$ is oriented by $B=\mathcal{O}_{7523416}$. The dotted circles represented the element $i'$ at each step in the Algorithm~\ref{alg:kappa6}. The dotted path $[1,3,4,6]$ represent the source path $\kappa$ of Example~\ref{source path}.}
    \label{fig2}
\end{figure}
    
\end{example}

\begin{proposition}\label{proofalgo}

    Given two distinct acyclic orientations $A$ and $B$ on $\HH$ and a non-coherent flip \flipppp, where $i=A(H)< j=B(H)$ for some $H \in \HH$, Algorithm~\ref{alg:kappa6} will stop and produce a source path.

\end{proposition}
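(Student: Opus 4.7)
The plan is to verify, step by step, that every object invoked by Algorithm~\ref{alg:kappa6} is well-defined, that the vertices appended to $\kappa$ trace out a directed walk in $A$, and that the nested loops halt after finitely many iterations.

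First I would check the existence of the auxiliary objects at each call. Since the algorithm is entered with a non-coherent flip, Lemma~\ref{lemprecoherent} implies that this flip is also non-pre-coherent. Consequently, each time Proposition~\ref{mustmove} is invoked along a non-edge path $\lambda$, it produces a hyperedge $H'$ with $A(H')\neq B(H')$ and $[A(H'),h']\subset\lambda$. This guarantees the existence of the vertex $i'$ on line~6, and simultaneously ensures $S_{i'}\neq\emptyset$, so that the small hyperedge $\mathbf{H}_{i'}$ on line~8 is well-defined. The minimized hyperedge $\mathbf{H}^{(M)}_{i,h}$ exists by Definition~\ref{mini}, since the weakly decreasing sequence $(B(\mathbf{H}^{(k)}_{i,h}))_k$ stabilizes after finitely many steps.

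Next I would argue that the concatenated output is a directed walk in $A$. Each block $[\hat{\imath},\dots,i']$ appended on line~7 is a subpath of some path in $A$ ending at the new source $i'$, and the next inner-loop iteration starts a new subpath from $i'$, so the pieces glue together into one directed walk in $A$. Because $A$ is acyclic, this walk visits each vertex at most once, hence $|\kappa|\leq n$. In particular the inner loop, which strictly extends $\kappa$ at every iteration, fires at most $n$ times throughout the entire execution.

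The main obstacle I expect is controlling the outer-loop iterations during which the inner loop does not fire, so that only the minimized-hyperedge update $j\leftarrow B(\mathbf{H}^{(M)}_{i,h})$ takes place and $i$ stays fixed. The key will be to exploit the stabilization built into the definition of $M$: after such an update, any simple tuple $(i,j)_{\lambda}$ that survives for the new $j$ must, by a short case analysis using Proposition~\ref{mustmove} and the definition of a minimized hyperedge, either force the existence of a source of some $H'$ internal to $\lambda$ (triggering the inner loop and strictly advancing $i$) or return exactly the stable value $B(\mathbf{H}^{(M)}_{i,h})$ already reached, at which point the outer-loop test fails and the algorithm exits. Combined with the bound on inner-loop iterations obtained from the acyclicity of $A$, this forces termination, and the walk argument above certifies that the returned object $\kappa$ is a legitimate source path.
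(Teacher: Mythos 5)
Your verification that the individual steps are well-defined, and your bound on the number of inner-loop iterations (since $\kappa$ is a directed walk in the acyclic orientation $A$, hence has at most $n$ vertices, hence the inner loop fires at most $n$ times over the whole execution), both match the paper's reasoning and are fine.

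The gap is in the termination argument for the outer loop, which you correctly flag as ``the main obstacle'' but do not actually close. Your proposed dichotomy is incoherent: a tuple $(i,j)_\lambda$ that is \emph{simple} is, by definition, exactly the kind that does \emph{not} trigger the inner loop, so ``force the existence of a source of some $H'$ internal to $\lambda$ (triggering the inner loop)'' cannot be an option for a simple tuple. And the other branch is worse: if the minimized-hyperedge update returns the same value $j$ that you already had, the outer-loop test ``a non-edge path exists from $i$ to $j$'' is \emph{still satisfied} (nothing about the orientation or the paths changed), so the algorithm does not exit — it spins forever. Nothing in Definition~\ref{mini} prevents the sequence of $j$-values produced by successive passes through Step~12 from stabilizing at a fixed nonzero value or oscillating, because each pass may start from a different second vertex $h$ of a different witnessing non-edge path, so the ``stabilization built into $M$'' only controls one call to the recursion, not the interaction between successive calls. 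The paper closes this gap with a genuinely different argument: assume for contradiction that the outer loop runs forever after $\kappa$ stops growing; then one obtains an infinite sequence of minimized hyperedges $\mathbf{H}^{(M)}_{i,y_s}$, and since $\HH$ is finite two of them coincide; unpacking the recursive definition of minimized hyperedges, together with the fact that simplicity of the tuples forces the tails $[y_{s+1},\dots,B(\mathbf{H}^{(M)}_{i,y_s})]$ to survive in $B$, yields an explicit directed cycle in $B$, contradicting acyclicity of $B$. Your sketch never invokes the acyclicity of $B$ at all, which is the crucial ingredient for the outer-loop bound, so as written your plan would not go through.
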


\begin{proof}

    Let us consider two distinct acyclic orientations $A$ and $B$ on $\HH$. Let us consider the Algorithm~\ref{alg:kappa6}.

    Each step of the algorithm is well defined. Indeed, let us study the non-trivial steps. Step $4$ is given by Definition~\ref{preco} of a pre-coherent path and by Proposition~\ref{lemprecoherent}. Step $5$ is given by Definition~\ref{simple} of a tuple that is simple. Step $6$ is given by Proposition~\ref{mustmove} and Definition~\ref{simple}. Step $8$ is given by Definition~\ref{small} of a small hyperedge. Step $12$ is given by Definition~\ref{mini}.
    
    Since $i$ corresponds to the source of some hyperedge $H\in \HH$ such that $A(H)=i\neq B(H)$ and $j=B(H)$, the input conditions of Algorithm~\ref{alg:kappa6} are satisfied and this certified that the set $S_i$ is not empty. Hence, Algorithm~\ref{alg:kappa6} will go through Step $1$, Step $2$ and Step $3$. Thus, $\kappa$ will be a path of length $\geq 0$ and therefore, Algorithm~\ref{alg:kappa6} will produce a source path.

    Now, given the hypothesis of the proposition, let us show that Algorithm~\ref{alg:kappa6} will stop. Let us consider the construction of the path $\kappa$ in $A$ given by the Algorithm~\ref{alg:kappa6}. Suppose, for the sake of contradiction, that the algorithm will never stop. Since $\HH$ is a hypergraph on a finite number of vertices and since $A$ is acyclic this implies that the source path $\kappa$ will be of finite length.
    
    Thus, from a certain iteration of Algorithm~\ref{alg:kappa6}, we do not append $\kappa$ for an infinite consecutive number of steps. Let us consider the $k^{th}$ iteration of the first \textit{``while''} loop in Algorithm~\ref{alg:kappa6}, corresponding to the last iteration associated to an increasing length iteration of $\kappa$. 
    
    Under the assumption that the algorithm does not terminate, there exist an infinite sequence of minimized hyperedges $(\mathbf{H}^{(M)}_{i,y_s})_{s\in \NN}$ such that for each of them, there exists at least one non-edge path from the last vertex $i$ to the vertex $B(\mathbf{H}^{(M)}_{i,y_s})$. Here, $s$ corresponds to the $s^{th}$ iteration of Step $12$ in Algorithm~\ref{alg:kappa6} and, the vertices $y_{s}$ are associated to the paths $[i,y_s,\dots,j]$ in $A$.

    To complete the proof, we will show the existence of some paths in $B$ that form a cycle, leading to a contradiction of the hypothesis of $B$ being acyclic.

    \pagebreak
    
    Since the algorithm does not stop after the $s^{th}$ iteration of Step $12$, we have a non-edge path from $i$ to $B(\mathbf{H}^{(M)}_{i,y_s})$ in $A$. The existence of the $(s+1)^{th}$ iteration of Step $12$ and the fact that the length of the path $\kappa$ stops increasing after the $k^{th}$ iterations, give us the existence of a path of the form $[i,y_{s+1},\dots,B(\mathbf{H}^{(M)}_{i,y_s})]$ in $A$ and the tuple $(i,B(\mathbf{H}^{(M)}_{i,y_s}))_\lambda$ is simple in $A$ for all path $\lambda$ from vertex $i$ to $B(\mathbf{H}^{(M)}_{i,y_s})$ in $A$. Hence, we have the existence of the following path in $B$:

    $$\alpha_{y_{s+1}}=\left[y_{s+1},\dots,B\left(\mathbf{H}^{(M)}_{i,y_s}\right)\right].$$
    
    Now, consider the recursive construction of a minimized hyperedge $\mathbf{H}^{(M)}_{i,y_s}$ given by Definition~\ref{mini}. For any vertex $y_s$, we have that the vertices $B(\mathbf{H}^{(t)}_{i,y_s})$ and $B(\mathbf{H}^{(t-1)}_{i,y_s})$ are contained in the hyperedge $ \mathbf{H}^{(t)}_{i,y_s}$. This gives us the existence of the following path in $B$:
    
    $$\alpha_{y_s}^t=\left[B\left(\mathbf{H}^{(t)}_{i,y_s}\right),B\left(\mathbf{H}^{(t-1)}_{i,y_s}\right)\right],$$
    
    For $2\leq t\leq M$. Moreover, since Definition~\ref{mini} gives us $\mathbf{H}^{(1)}_{i,y_s}=\mathbf{H}_{i,y_s}$, we have that the vertex $B(\mathbf{H}^{(1)}_{i,y_s})$ and the vertex $y_s$ are contained in $\mathbf{H}^{(1)}_{i,y_s}$. This gives us the existence of the following path in $B$:

    $$\alpha_{y_s}^1=\left[B\left(\mathbf{H}^{(1)}_{i,y_s}\right),y_s\right].$$

    Since we supposed that the sequence of minimized hyperedges $(\mathbf{H}^{(M)}_{i,y_s})_{s\in \NN}$ is infinite and since $\HH$ is finite, we have the existence of integers $u<v\in \NN$ greater than $k$, for which $\mathbf{H}^{(M)}_{i,y_u}=\mathbf{H}^{(M)}_{i,y_v}$. Therefore this gives us the existence of a cycle from the vertex $B(\mathbf{H}^{(M)}_{i,y_u})$ to the vertex $B(\mathbf{H}^{(M)}_{i,y_v})$ in $B$ given by the concatenation of the following paths in $B:$

    $$(\alpha_{y_{v}}^M \circ \cdots \circ \alpha_{y_{v}}^1 \circ \alpha_{y_{v}}) \circ (\alpha_{y_{v-1}}^M \circ \cdots \circ \alpha_{y_{v-1}}^1 \circ \alpha_{y_{v-1}}) \circ \cdots \circ (\alpha^{M}_{y_{u+1}} \circ \cdots \circ \alpha_{y_{u+1}}). $$

    This is a contradiction since $B$ is acyclic. Therefore, the sequence of minimized hyperedges $(\mathbf{H}^{(M)}_{i,y_s})_{s\in \NN}$ is finite. This implies that Algorithm \ref{alg:kappa6} terminates and this concludes the proof. 
    
\end{proof}

\pagebreak

\begin{example}
    Figure \ref{fig10} illustrates this following example. Let $\HH=\{H_1=12,H_2=145, H_3=134,H_4=127,H_5=25,H_6=37\}$ and let us consider the orientations $A=\mathcal{O}_{1673452}$ and $B=\mathcal{O}_{6734521}$. Their source sequence are respectively $S_A=(1,1,1,1,5,7)$ and $S_B=(6,4,3,2,5,7)$. 
    
    Let $i=1$ and let $j=2$. Since there is a non-edge path from vertex $i$ to vertex $j$, the flip \flipppp is non-coherent. Moreover, since, $H_1$ is a small hyperedge for the set $S_1$ and $B(H_1)=2=j$, let us consider Algorithm~\ref{alg:kappa6} for the vertices $i,j$. 
    
    From Definition~\ref{simple}, we have that $(i,j)_\lambda$ is simple for all path $\lambda$ from vertices $i$ to $j$ in $A$. Moreover, $\gamma=[1,5,2]$ is the only non-edge path from $i=1$ to $j=2$. Hence, according to the proof of Proposition~\ref{proofalgo}, $y_1=5$. Therefore, Step $12$ gives us:
    
    $$\begin{matrix}
        \mathbf{H}_{i,y_1}^{(1)}= \mathbf{H}_{i,y_1} &\longrightarrow & \mathbf{H}_{1,5}^{(1)}=H_2
    \end{matrix}$$

    and 

    $$\begin{matrix}
         \mathbf{H}_{i,y_1}^{(2)}=\mathbf{H}_{i,B(H_{i,y_1})} &\longrightarrow& \mathbf{H}_{1,5}^{(2)}=\mathbf{H}_{i,B(H_2)}= H_3
    \end{matrix}.$$

    We remark that $H_3$ is the only possible minimized hyperedge $\mathbf{H}^{(M)}_{i,y_1}$ and that the flip \linebreak \flipppsp is non-coherent. Therefore, we start the process again for $i=1$ and $j=B(H_3)=3$.  From Definition~\ref{simple}, we have that, $(i,j)_\lambda$ is simple for all path $\lambda$ from vertices $i$ to $j$ in $A$. Moreover, $\gamma'=[1,7,3]$ is the only non-edge path from $i=1$ to $j=B(H_3)=3$. Hence, according to the proof of Proposition~\ref{proofalgo}, $y_2=7$. Therefore, Step $12$ gives us:

    $$\begin{matrix}\mathbf{H}_{i,y_2}^{(1)}= \mathbf{H}_{1,7} & \longrightarrow & \mathbf{H}_{1,7}^{(1)}=H_4 \end{matrix}.$$

    From the above information, we can consider some paths in $B$ as described in the proof of Proposition~\ref{proofalgo}. Since $y_1=5$ and $B(\mathbf{H}_{i,y_1}^{(1)})=B(H_2)=4$ are in $H_2$ this gives us the existence of the following path in $B$:
    
    $$\begin{matrix} \alpha_{y_1}^1=[B(\mathbf{H}_{i,y_1}^{(1)}),y_i] & \rightarrow & \alpha_{5}^1=[B(\mathbf{H}_{1,5}^{(1)}),5] =[B(H_2),5] =[4,5].
    \end{matrix}$$

    Since $B(\mathbf{H}_{i,y_1}^{(1)})=B(H_2)=4$ and $B(\mathbf{H}_{i,y_1}^{(2)})=B(H_3)=3$ are in $H_3$ this gives us the existence of the following path in $B$:
    
    $$\begin{matrix} \alpha_{y_1}^2=[B(\mathbf{H}_{i,y_1}^{(2)}),B(\mathbf{H}_{i,y_1}^{(1)})] & \rightarrow & \alpha_{5}^2=[B(\mathbf{H}_{1,5}^{(2)}),B(\mathbf{H}_{1,5}^{(1)})] =[B(H_3),B(H_2)] =[3,4].
    \end{matrix}$$

    Since, the following tuple is simple
    
    $$(i,B(\mathbf{H}_{i,y_1}^{(2)}))_\gamma = (1,3)_{[1,7,3]},$$

    this gives us the existence of the following path in $B$:

    $$\begin{matrix} \alpha_{y_2}=[y_2,B(\mathbf{H}_{i,y_1}^{(2)})] & \rightarrow & \alpha_{7}=[7,B(H_3)] =[7,3].
    \end{matrix}$$

    Since $B(\mathbf{H}_{i,y_2}^{(1)})=B(H_4)=6$ and $y_2=7$ are in $H_4$ this gives us the existence of the following path in $B$:
    
    $$\begin{matrix} \alpha_{y_2}^1=[B(\mathbf{H}_{i,y_2}^{(1)}),y_2] & \rightarrow & \alpha_{7}^1=[B(\mathbf{H}_{1,7}^{(1)}),7] =[B(H_4),7] =[6,7].
    \end{matrix}$$
    
\end{example}

\input{exampleproof1}

\subsection{Source characterization of $P_\HH$}

Let us consider the hypergraph $\HH$ and two distinct acyclic orientations $A$ and $B$ such that $S_A< S_B$.

\begin{proposition}\label{mustcover}

    Let $A$ and $B$ be two distinct acyclic orientations of $\HH$ such that $S_A< S_B$. There always exist a pair of vertices $i< j$ in some hyperedge $H\in \HH$ such that 

    \begin{itemize}
        \item $A(H)=i$ and $B(H)=j$;
        \item The only paths between $i$ and $j$ are of length $1$
    \end{itemize}
     
\end{proposition}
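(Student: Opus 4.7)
The plan is to exhibit the required pair by invoking Algorithm~\ref{alg:kappa6} together with Proposition~\ref{proofalgo}. Since $S_A < S_B$, there is at least one hyperedge on which $A$ and $B$ disagree; pick any such hyperedge and let $i_0$ be its source under $A$, so that $S_{i_0} \neq \emptyset$. Definition~\ref{small} then supplies a small hyperedge $\mathbf{H}_{i_0}$, and we set $j_0 = B(\mathbf{H}_{i_0})$. Condition $(2)$ of Definition~\ref{small} forces $i_0 < j_0$.

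If no non-edge path from $i_0$ to $j_0$ exists in $A$, we are done with $H = \mathbf{H}_{i_0}$, since $\mathbf{H}_{i_0}$ itself furnishes the edge $[i_0, j_0]$ in $A$. Otherwise, Lemma~\ref{lemprecoherent} tells us that the flip \flipppp is non-coherent, so the input conditions of Algorithm~\ref{alg:kappa6} are satisfied. Proposition~\ref{proofalgo} then guarantees that the algorithm terminates in finitely many iterations and produces the source path $\kappa$.

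At termination, the outer \textit{while} loop fails, which means that for the current values $i^*$ and $j^*$ there is no non-edge path from $i^*$ to $j^*$ in $A$. Inspecting how $j^*$ is updated (at the lines of the form $j \leftarrow B(\mathbf{H}_{i'})$ inside the inner loop, and $j \leftarrow B(\mathbf{H}^{(M)}_{i,h})$ after the inner loop), the value $j^*$ is always of the form $B(H^*)$ for some hyperedge $H^* \in \HH$ with $A(H^*) = i^*$---either a small hyperedge $\mathbf{H}_{i^*}$ of $S_{i^*}$, or a minimized hyperedge $\mathbf{H}^{(M)}_{i^*, h}$. In both cases $H^*$ lies in a set of the form $S_{i^*}$ or $S_{i^*, h_{M-1}}$, so Definition~\ref{small} forces $i^* < j^*$. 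Hence $H = H^*$ satisfies $A(H) = i^* < j^* = B(H)$, and the only paths from $i^*$ to $j^*$ in $A$ are edges, which is precisely the conclusion sought.

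The substantive difficulty---showing that Algorithm~\ref{alg:kappa6} does not iterate forever---has already been handled in Proposition~\ref{proofalgo} through a delicate acyclicity argument on $B$ that concatenates the paths $\alpha_{y_s}^t$ built from the minimized hyperedges. Granted that result, the present proposition is largely a matter of reading off the exit condition of the outer loop and checking that the last-updated hyperedge still witnesses the source configuration $A(H) = i^*$, $B(H) = j^*$ we need.
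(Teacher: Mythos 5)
Your proof is correct and takes essentially the same approach as the paper's: both read off the exit condition of the outer loop of Algorithm~\ref{alg:kappa6}, use Proposition~\ref{proofalgo} for termination, and observe that the final $(i^*, j^*)$ pair comes from a small or minimized hyperedge $H^*$ with $A(H^*)=i^*$, $B(H^*)=j^*$, and no non-edge path between them. Your version is a bit more explicit in separating off the degenerate case where the initial flip is already coherent and in verifying $i^* < j^*$ via condition~(2) of Definition~\ref{small}, but the substance matches the paper.
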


\begin{proof}

    By construction of $\kappa$ given by Algorithm~\ref{alg:kappa6} we have that the last vertex $x$ of this path correspond to a source sequence of some small or minimized hyperedge $H\in \HH$ such that $A(H)=x$ and such that $A(H)\neq B(H)$. Also by construction of $\kappa$ we have that the only paths between $A(H)$ and $B(H)$ are edges. Therefore, we set this last vertex $A(H)=x$ of the source path $\kappa$ to correspond to $i$ and $B(H)$ to correspond to $j$. This proves the result.
\end{proof}

\begin{corollary}\label{mustflip}

    Let $A$ and $B$ be two distinct acyclic orientations of $\HH$ such that $S_A< S_B$. There always exists a coherent flip \flipppp such that $S_A < S_{A'} \leq S_B$.
    
\end{corollary}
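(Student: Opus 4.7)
The plan is to use Proposition~\ref{mustcover} to select a specific coherent flip and then verify the required source-sequence bound via an extension of Proposition~\ref{showsmall}. First, I will apply Proposition~\ref{mustcover} to obtain vertices $i<j$ and a hyperedge $H\in\HH$ with $A(H)=i$, $B(H)=j$, and no non-edge path from $i$ to $j$ in $A$. Letting $A'$ be the orientation produced by the flip on the pair $(i,j)$, the absence of non-edge paths makes the flip pre-coherent by Definition~\ref{preco}, so Lemma~\ref{lemprecoherent} certifies coherence and the acyclicity of $A'$.

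The bound $S_A < S_{A'}$ follows immediately: $H$ contains $\{i,j\}$ with $A(H)=i$, so Definition~\ref{flip} forces $A'(H)=j$, and since no source decreases under an increasing flip this yields strict comparability.

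The main content is the inequality $S_{A'}\leq S_B$. Suppose $A'(H')>B(H')$ for some $H'$. Since $A(H')\leq B(H')<A'(H')$, we have $A(H')\neq A'(H')$, so Definition~\ref{flip} forces $A(H')=i$, $A'(H')=j$, $\{i,j\}\subseteq H'$, and $B(H')<j$. I will split into two cases. If $B(H')=i$, then in $B$ the hyperedge $H'$ yields an edge $i\to j$ (source $i$, target $j\in H'$) while $H$ yields an edge $j\to i$ (source $j$, target $i\in H$), producing the $2$-cycle $i\to j\to i$ in $B$ and contradicting its acyclicity. If $B(H')>i$, then $H'\in S_{i,j}$; the hyperedge $H$ returned by Algorithm~\ref{alg:kappa6} is either a small hyperedge of $S_i\supseteq S_{i,j}$, in which case $j=B(H)\leq B(H')$ as in Proposition~\ref{showsmall}, or a minimized hyperedge $\mathbf{H}^{(M)}_{i,h}$, in which case the stabilization $B(\mathbf{H}^{(M)}_{i,h})=B(\mathbf{H}^{(M+1)}_{i,h})$ from Definition~\ref{mini} together with the identity $\mathbf{H}^{(M+1)}_{i,h}=\mathbf{H}_{i,j}$ promotes the smallness bound to all of $S_{i,j}$, again forcing $j\leq B(H')$ and contradicting $B(H')<j$.

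The main obstacle is the minimized-hyperedge sub-case: a priori the smallness of $\mathbf{H}^{(M)}_{i,h}$ only bounds $B$ over $S_{i,h_{M-1}}$, whereas $H'$ lives in the possibly larger set $S_{i,j}$. Using the stabilization identity to deduce $B(\mathbf{H}_{i,j})=j$ is exactly what bridges this gap and closes the argument.
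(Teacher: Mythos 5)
Your proof is correct and follows essentially the same structure as the paper's: invoke Proposition~\ref{mustcover} and Lemma~\ref{lemprecoherent} to certify coherence, establish $S_A < S_{A'}$ directly from Definition~\ref{flip}, and then for $S_{A'}\leq S_B$ split on whether the flip hyperedge is a small hyperedge of $S_i$ or a minimized hyperedge, using the stabilization $B(\mathbf{H}^{(M)}_{i,h})=B(\mathbf{H}^{(M+1)}_{i,h})=B(\mathbf{H}_{i,j})$ in the latter case to extend the minimality bound to $S_{i,j}$. You are somewhat more careful than the paper in explicitly excluding the degenerate possibility $A(H')=B(H')=i$ via the $2$-cycle $i\to j\to i$ in $B$; the paper's proof quietly restricts attention to hyperedges with $A(H')\neq B(H')$, and your argument supplies the missing justification that this restriction is harmless.
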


\begin{proof}

    By construction of $\kappa$ given by Algorithm~\ref{alg:kappa6} we have that the last vertex $x$ of this path correspond to a source sequence of some small or minimized hyperedge $H\in \HH$ such that $A(H)=x$ and such that $A(H)\neq B(H)$. More specifically, $A(H)<B(H)$ since $S_A\leq S_B$.
    
    Also by construction of $\kappa$ we have that only the paths between $A(H)$ and $B(H)$ are edges. Therefore, we set the last vertex $A(H)=x$ of the source path $\kappa$ to correspond to $i$ and $B(H)$ to correspond to $j$. Proposition~\ref{mustcover} followed by Lemma~\ref{lemprecoherent} gives us that \flipss is coherent.
    
    Suppose that $H$ is a minimized hyperedge for a set $S_{i,h}$. By Definition~\ref{mini} we have that $H$ is a small hyperedge for a set $S_{i,h'}$. Since $H$ is a minimized hyperedge for a set $S_{i,h}$ we have that $B(H)= B(\mathbf{H}^{(M)}_{i,h'})$ but also that $B(H)=B(\mathbf{H}^{(M)}_{i,B(H)})$. This implies that for any hyperedge $H'\in\HH$ that contains vertices $i, B(H)$ such that $A(H')\neq B(H')$ with $A(H')=i$, we have that $B(\mathbf{H}^{(M)}_{i,B(H)})=B(H) \leq B(H')$. Therefore, for the flip \flipss \hspace{-0.2cm}, we get that  $S_A< S_{A'}\leq S_B$.
    
    In the case where $H$ is a small hyperedge for the set $S_i$, this implies that for any hyperedge $H'\in\HH$ that contains vertex $i$ and such that $A(H')\neq B(H')$ with $A(H')=i$, we have that $B(H) \leq B(H')$. Therefore, for the flip \flipss \hspace{-0.2cm}, we get that  $S_A< S_{A'}\leq S_B$.
\end{proof}

\begin{mythm}{A}
     For any acyclic orientations $A$ and $B$ of a hypergraph $\HH$ 

    $$A \leq B \Longleftrightarrow A(H)\leq B(H) \textit{ for all } H \in \HH.$$
\end{mythm}

\begin{proof}
    The forward direction is immediate as it holds for increasing flip by Definition~\ref{flip} given that any cover of $P_\HH$ is an increasing flip. For the backward direction, we have by Corollary~\ref{mustflip} that for any source sequence $S_A \leq S_B$ there exists an increasing coherent flip \flipppp such that $A'$ is acyclic, $A < A'$ and such that $S_A<S_{A'}\leq S_B$. Since the elements $i,j$ of the coherent flip \flipppp correspond to $A(H)=i$ and $B(H)=j$, we get that $A(H)\neq A'(H)=B(H)=j$ only for hyperedges $H\in \HH$ that contains $i,j$. Therefore, on the distance from $A$ to $B$ defined by $\sum_{H} (B(H)-A(H))$, we can conclude the result by induction.
\end{proof}

\section*{Acknowledgments}

The author would like to thank Nantel Bergeron, Vincent Pilaud and Mike Zabrocki.

\bibliographystyle{alpha}
\bibliography{bibliography}{}

\end{document}